\newtheorem{theorem}{Theorem}[section]
\numberwithin{theorem}{section}
\newtheorem{lemma}[theorem]{Lemma}
\newtheorem{corollary}[theorem]{Corollary}
\theoremstyle{definition}
\newtheorem{definition}[theorem]{Definition}
\newtheorem{rem}[theorem]{Remark}
\numberwithin{equation}{section}
\newcommand{\N}{\mathbb{N}}
\newcommand{\R}{\mathbb{R}}
\newcommand\cH{\mathcal H}
\newcommand\ov\overline
\newcommand\eps\varepsilon
\renewcommand\epsilon\varepsilon
\renewcommand\rho\varrho
\newcommand\al\alpha
\newcommand\la\lambda
\newcommand\ds\displaystyle
\newcommand\p\partial
\newcommand{\beq}{\begin{equation}}
\newcommand{\eeq}{\end{equation}}
\newcommand{\be}{\begin{equation*}}
\newcommand{\ee}{\end{equation*}}
\newcommand{\bmat}{\begin{pmatrix}}
\newcommand{\emat}{\end{pmatrix}}
\newcounter{counter_a}
\newcommand{\diagdots}[3][-25]{%
  \rotatebox{#1}{\makebox[0pt]{\makebox[#2]{\xleaders\hbox{$\cdot$\hskip#3}\hfill\kern0pt}}}%
}
\author[F.~Ferraresso]{Francesco Ferraresso}
\address{Dipartimento di Informatica, Universit\`{a} degli Studi di Verona}
\email{francesco.ferraresso@univr.it}
\author[P.D.~Lamberti]{Pier Domenico Lamberti}
\address{Dipartimento di Tecnica e Gestione dei Sistemi Industriali, Universit\`{a} degli Studi di Padova}
\email{pierdomenico.lamberti@unipd.it}
\date{\today}
\thanks{}
\title[]{Steklov vs. Steklov: A Fourth-Order Affair Related to the Babu\v{s}ka Paradox}
\begin{document}

\maketitle
\vspace{-0.4cm}
{\centering \small\textit{To Professor Filippo Gazzola, on the occasion of his 60$^{th}$ birthday}.\par}

\begin{abstract} 
We discuss two fourth-order Steklov problems  and highlight a Babu\v{s}ka paradox appearing in their  approximations on convex domains via sequences of 
convex polygons. To do so,  we prove that the eigenvalues of one of the two problems  depend with continuity upon domain perturbation in the class of convex domains, extending a result known in the literature for the first eigenvalue. This is obtained by  examining in detail a nonlocal, second-order problem for harmonic functions introduced by Ferrero, Gazzola, and Weth. We further review how this result is connected to diverse variants of the classical Babu\v{s}ka paradox for the hinged plate and to a degeneration result by Maz'ya and Nazarov. 
\end{abstract}

%{\color{blue}

\section{Introduction}

Let $\Omega$ be a sufficiently regular  bounded  domain in  $\R^N$ (e.g., it is enough to assume that $\Omega$ is convex or of class $C^{1,1}$).
The Dirichlet Biharmonic Steklov (DBS) eigenvalue problem is defined as follows:
\begin{equation}\label{eq:DBS}
\begin{cases}
\Delta^2 u = 0, &\textup{in $\Omega$,} \\
u = 0, \quad &\textup{on $\p \Omega$,} \\
\Delta u = \la \, \p_\nu u, \quad &\textup{on $\p \Omega$,}
\end{cases}
\end{equation}
in the unknown $u \in H^2(\Omega) \cap H^1_0(\Omega)$ (the eigenfunction) and $\la \in \R$ (the eigenvalue). Note that in this paper $\nu$ denotes  the unit outer normal to $\partial\Omega$,  $H^m(\Omega)$ is the standard Sobolev space of functions in $L^2(\Omega)$ with  weak derivatives up to  order $m$ in $L^2(\Omega)$ and $H^m_0(\Omega)$ is the closure in $H^m(\Omega)$ of $C^{\infty}$-functions with compact support in $\Omega$. 

Problem \eqref{eq:DBS} is understood in weak sense; namely, we seek for a function $u \in H^2(\Omega) \cap H^1_0(\Omega)$ and a real number $\lambda$ verifying the variational equality
\begin{equation}\label{eq:DBSweak}
\int_\Omega \Delta u \Delta \varphi dx= \la \int_{\p \Omega} \p_\nu u \, \p_\nu \varphi \, d\sigma 
\end{equation}
for all $\varphi \in H^2(\Omega) \cap H^1_0(\Omega)$. 

It is well-known that problem \eqref{eq:DBSweak} has a divergent sequence of positive eigenvalues  of finite multiplicity that we denote by $\lambda_n(\Omega)$ and  that we order as 
$$
0<\lambda_1(\Omega)\le \dots  \le \lambda_n(\Omega)\le \dots 
$$
where each eigenvalue is repeated according to its multiplicity.

Problem \eqref{eq:DBS} is  classical and has attracted the attention of several mathematicians starting with Kuttler and Sigillito~\cite{kutsig68}. Among other papers, we quote \cite{antgaz, auch17,   bergazmit,  bucfergaz, bucgaz, buoso16, fergazwet,   FerreroLamb, gazpie, gazswe,  kutler72, kutler79, payne70}. One motivation for the study of this problem can be found by means of  the Fichera's Duality Principle which allows to prove that the first positive eigenvalue $\lambda_1(\Omega)$ of  \eqref{eq:DBS} is the best constant in the apriori estimate 
\begin{equation}\label{fich}
\lambda_1(\Omega)\| u\|_{L^2(\Omega)}^2\le \| u\|_{L^2(\partial\Omega)}^2
\end{equation}
for harmonic functions $u$ in $\Omega$, see \cite{fichera, kutsig68},  see also   \cite{fergazwet} for  more details. Moreover,  $\lambda_1(\Omega)$ plays a role in the study of a positivity preserving property of the solutions to certain fourth-order problems, see  \cite{bergazmit, gazswe}.

A variant of the (DBS), called Modified Dirichlet Biharmonic Steklov (MDBS) eigenvalue problem in \cite{FerreroLamb}, is instead defined as follows:
\begin{equation}\label{eq:MDBS}
\begin{cases}
\Delta^2 u = 0, &\textup{in $\Omega$,} \\
u = 0, \quad &\textup{on $\p \Omega$,} \\
\p^2_{\nu \nu} u = \mu \, \p_\nu u, \quad &\textup{on $\p \Omega$,}
\end{cases}
\end{equation}
in the unknown $u \in H^2(\Omega) \cap H^1_0(\Omega)$ (the eigenfunction) and $\mu \in \R$ (the eigenvalue). The corresponding weak formulation is: find  $u \in H^2(\Omega) \cap H^1_0(\Omega)$ verifying the variational equality
\begin{equation} \label{eq:MDBSweak}
\int_\Omega D^2 u : D^2 \varphi dx= \mu \int_{\p \Omega} \p_\nu u \, \p_\nu \varphi \, d\sigma 
\end{equation}
for all $\varphi \in H^2(\Omega) \cap H^1_0(\Omega)$. Here $D^2f$ denotes the Hessian matrix of a function $f$ and $A:B$ is the usual Frobenius product of two matrices $A,B$. 

Problem \eqref{eq:MDBS} also admits a divergent sequence of positive eigenvalues of finite multiplicity that we denote by $\mu_n(\Omega)$ and that we order as 
$$
0<\mu_1(\Omega)\le \dots  \le \mu_n(\Omega)\le \dots 
$$
taking into account their multiplicity.  

Up to our knowledge  problem   \eqref{eq:MDBS} was introduced in \cite{buoso16, FerreroLamb}. The main motivation for the study of this variant comes from the the Kirchhoff-Love model for a thin hinged plate since $\int_{\Omega}|D^2u|^2dx$ is one of the main terms in the elastic energy. 
Indeed, if $\Omega\subset \mathbb{R}^2$ represents the cross section of a thin hinged plate subject to an external load $f$ then the elastic energy is given by
$$
\int_{\Omega}\sigma (\Delta u)^2+(1-\sigma)|D^2u|^2 -fudx
$$
where $u\in H^2(\Omega)\cap H^1_0(\Omega)$  represents the deflection of the plate in vertical direction. 
Here $\sigma $ is the Poisson ratio of the plate that for physical reasons is often assumed to satisfy the condition $0\le \sigma \le 1/2$. Other values of $\sigma $ are also interesting in applications and, for $\Omega$ in $\mathbb{R}^N$, the typical assumption is  $-1/(N-1)<\sigma <1$, while  $\sigma =1$ represents a limiting case.

Thus the (DBS) problem corresponds to the limiting case $\sigma =1$ and the (MDBS) problem corresponds to the case $\sigma=0$. Although the other convex combinations of the two problems are clearly of interest, here we focus on the two extreme cases that exhibit a different behavior.  

\begin{figure}[htp]
\centering
\begin{tikzpicture}[scale=0.5]
% Definizione del raggio del cerchio 
\def\raggio{3}
% Calcola i vertici del poligono regolare inscritto
\foreach \angolo in {0,60,...,360} {
    \coordinate (P\angolo) at ({\raggio*cos(\angolo)},{\raggio*sin(\angolo)});
}
% Disegna la spezzata 
\foreach \angolo [evaluate=\angolo as \prossimo using \angolo+60] in {0,60,...,300} {
   \draw[black, very thick] (P\angolo) -- (P\prossimo);
}

\draw[thick, black] (0,0) circle(3cm);

\end{tikzpicture} \hfill
\begin{tikzpicture}[scale=0.5]
% Definizione del raggio del cerchio e della distanza per l'indentazione
\def\raggio{3}
% Calcola i vertici del poligono regolare circoscritto
\foreach \angolo in {0,30,...,360} {
    \coordinate (P\angolo) at ({\raggio*cos(\angolo)},{\raggio*sin(\angolo)});
}

% Disegna la spezzata con indentazioni
\foreach \angolo [evaluate=\angolo as \prossimo using \angolo+30] in {0,30,...,330} {
    \draw[black, very thick] (P\angolo) -- (P\prossimo);
}

\draw[thick, black] (0,0) circle(3cm);

\end{tikzpicture} \hfill
\begin{tikzpicture}[scale=0.5]
% Definizione del raggio del cerchio e della distanza per l'indentazione
\def\raggio{3}
% Calcola i vertici del poligono regolare circoscritto
\foreach \angolo in {0,20,...,360} {
    \coordinate (P\angolo) at ({\raggio*cos(\angolo)},{\raggio*sin(\angolo)});
}

% Disegna la spezzata con indentazioni
\foreach \angolo [evaluate=\angolo as \prossimo using \angolo+20] in {0,20,...,340} {
    \draw[black, very thick] (P\angolo) -- (P\prossimo);
}

\draw[thick, black] (0,0) circle(3cm);

\end{tikzpicture}
\caption{Approximation of the circle via convex polygons}
\label{fig:convexapp}
\end{figure}
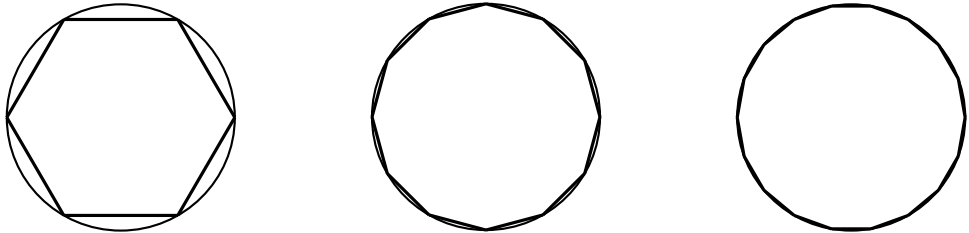

It has been pointed out in \cite[Theorems~3.7, 3.12]{H32} that for every $\sigma$ the (MDBS) problem is a limiting case of a family of biharmonic Steklov problems and that an analogous Neumann biharmonic Steklov problem is the limiting case of a second family of biharmonic Steklov problems. These two families of biharmonic Steklov problems have been introduced in \cite{H32} in order to characterize the trace spaces for functions in $H^2(\Omega)$ and to solve the Dirichlet biharmonic problem when $\Omega $ is a Lipschitz domain. 
 On the other hand, it is important  to observe that the several choices of Steklov  problems for $\Delta^2$ correspond to the several choices of different  Robin-type problems discussed in \cite{Buoken}. We refer to \cite{Buoken} for more details about such correspondence.

In this paper, we discuss a number of results concerning problems \eqref{eq:DBS} and \eqref{eq:MDBS}. One key point in this analysis is the hidden role of the curvature of $\partial\Omega$ in the boundary conditions of the two problems, a role that becomes evident in the appearance of a Babu\v{s}ka-type paradox. This paradox consists in the following three facts (see also Figure \ref{fig:convexapp}):
\begin{itemize}
\item[(i)] If $P$ is a convex polygon in the plane then $\lambda_n(P)=\mu_n(P)$ for all $n\in \N$.
\item[(ii)] If $D$ is the unit disk in the plane then $\lambda_n(D)=\mu_n (D)+1$.
\item[(iii)]  If $\{P_k\}_{k\in\N}$ is a sequence of regular polygons $P_k$ with $k$ sides inscribed or circumscribed to the unit disk $D$ then for every $n\in \N$
\begin{equation}\label{parastek} 
\lim_{k\to \infty}\mu_n(P_k)=\lim_{k\to \infty}\lambda_n(P_k)=\lambda_n(D)=\mu_n (D)+1\, .
\end{equation}
\end{itemize}

We note that,  in the case  where  $\Omega$ has a  regular boundary, the classical formula
\[
\Delta u|_{\p \Omega} = \p^2_{\nu \nu} u + \cH \p_\nu u + \Delta_{\p \Omega} u
\]
implies that the condition $\p^2_{\nu \nu} u = \mu \p_\nu u$ in the (MDBS) problem can be rewritten as
\[
\Delta u|_{\p \Omega} = (\cH + \mu) \p_\nu u .
\]
Here  and in the sequel $\cH $ denotes  the mean curvature of $\partial\Omega$ (the sum of the principal curvatures) and $\Delta_{\p \Omega}$  denotes the Laplace-Beltrami operator on $\partial\Omega$. In the case  of the unit disk $\cH =1$, hence  the shift $+1$ appears in \eqref{parastek} (see 
Theorem~\ref{nshift} for the $n$-dimensional unit ball).

The Babu\v{s}ka paradox  was first discovered in \cite{bab1961}  for the Poisson problem 
\begin{equation}\label{hingednav}
\begin{cases}
\Delta^2 u = f, &\textup{in $\Omega$,} \\
u = 0, \quad &\textup{on $\p \Omega$,} \\
\Delta u =0, \quad &\textup{on $\p \Omega$}
\end{cases}
\end{equation}
and its companion 
\begin{equation}\label{hinged}
\begin{cases}
\Delta^2 u = f, &\textup{in $\Omega$,} \\
u = 0, \quad &\textup{on $\p \Omega$,} \\
\p^2_{\nu \nu} u =0, \quad &\textup{on $\p \Omega$.}
\end{cases}
\end{equation}

In order to prove the Babu\v{s}ka Paradox for the Steklov eigenvalues we need  a stability result for  $\lambda_n(\Omega)$.
We note that the limit $\lim_{k\to \infty}\lambda_n(P_k)=\lambda_n(D)$ in \eqref{parastek} was first proved  in \cite{bucfergaz} for $n=1$ and circumscribed polygons. That result was later generalized in \cite{bucgaz} where it was proved that $\lambda_1(\Omega)$ depends with continuity on $\Omega$ with respect to the Hausdorff distance in the class of bounded convex domains. 

In this paper we prove that the continuity result of \cite{bucgaz}  holds for all eigenvalues $\lambda_n(\Omega)$, see Theorem~\ref{ncontinuity}. The proof of this stability result follows the main ideas of \cite[Thm.~5.1]{bucgaz}. In particular, one of the two involved semicontinuity results is obtained by exploiting a dual representation of the eigenvalues $\lambda_n(\Omega)$. Namely, in the spirit of the Fichera's Duality Principle, we define a sequence of dual eigenvalues $\delta_n(\Omega)$ and prove that $\lambda_n(\Omega)=\delta_n(\Omega)$ for all $n\in\N$. The equality $\lambda_1(\Omega)=\delta_1(\Omega)$ was already pointed out in Kuttler and Sigillito~\cite{kutsig68}  where it was formulated in terms of an equality between the minima of two Rayleigh quotients, see also \eqref{fich}.  Here we define  $\delta_n(\Omega)$ for all $n\in \N$ as the  eigenvalues of an operator in a Bergman  space, see Definition~\ref{ficheraweakeq} and Theorem~\ref{ficherahigh}. 

We observe that the instability of the eigenvalues $\mu_n(\Omega )$ upon variation of $\Omega$ in a family of domains with highly oscillating boundaries was proved in \cite{FerreroLamb} in the spirit of analogous results in   \cite{ArrLamb} for  the eigenvalue problem associated with  \eqref{hinged}. These results, which are reminiscent of the results in \cite{MazNaz} for indented polygons  (see Remark~\ref{mazrem}), are collected in the last of part of the paper. 

Finally, we mention that an extensive discussion of the two Steklov problems from the point of view of shape optimization has been recently carried out in \cite{sahakian}.

This paper is organized as follows. In Section~\ref{fichsec} we  prove the Fichera's duality Principle for all Steklov eigenvalues and apply it to prove the continuous dependence of $\lambda_n(\Omega)$ upon variation of $\Omega$. In Section~\ref{sec:classicalBab} we  discuss the new and the classical Babu\v{s}ka paradoxes. In Section~\ref{sec:oscillating} we survey a number of related results from   \cite{ArrLamb} and  \cite{FerreroLamb}.

\section{The Fichera's Principle for higher eigenvalues}\label{fichsec}

It is known since the work of Kuttler and Sigillito~\cite{kutsig68} that the first eigenvalue $\lambda_1(\Omega)$ of the biharmonic Steklov problem coincides with an eigenvalue $\delta_1(\Omega)$ of  a dual problem given by the minimization of  a suitable Rayleigh quotient, namely
$$
\delta_1(\Omega)=\inf_{\Delta h=0}\frac{\int_{\partial \Omega} h^2 d\sigma }{\int_{\Omega} h^2dx}.
$$ 

Kuttler and Sigillito claimed that  the equality $\lambda_1(\Omega)=\delta_1(\Omega)$  is a consequence of  the Fichera's duality Principle, a general principle  discussed by Fichera~\cite{fichera} in 1955 with several examples including the case of the biharmonic operator. 

Up to our knowledge, the first rigorous  proof of the equality  $\lambda_1(\Omega)=\delta_1(\Omega)$, as well as a clear definition of $\delta_1(\Omega)$, was  provided by Ferrero, Gazzola, and Weth~\cite{fergazwet} for domains of class $C^2$. Later,  the case of  Lipschitz domains satisfying the uniform outer ball condition or convex domains  was  considered  in Bucur, Ferrero, and Gazzola~\cite{bucfergaz} and in Bucur and Gazzola~\cite{bucgaz}. The equality  $\lambda_1(\Omega)=\delta_1(\Omega)$  is used in \cite{bucgaz} to prove the stability of $\lambda_1(\Omega)$ upon variation of $\Omega$ in the class of bounded convex domains. 

In this section we are going to  define the eigenvalues $\delta_n(\Omega)$  of higher order and to prove the equality $\lambda_n(\Omega)=\delta_n(\Omega)$ which will be used to extend the stability result of Bucur and Gazzola~\cite{bucgaz} to all eigenvalues $\lambda_n(\Omega)$.

In order to better clarify our plan, we note that  $\delta_1(\Omega)$ satisfies an eigenvalue problem that  in classical terms can be written as follows
\begin{equation}\label{ficheraclas}
\left\{\begin{array}{ll}
\Delta h=0,&\ {\rm in}\ \Omega,\\
h=\delta \, \p_\nu (\Delta^{\rm dir})^{-1}h,&\ {\rm on}\ \partial\Omega,
\end{array}\right.
\end{equation}
where $\delta$ is the eigenvalue, $h$ the eigenfunction,  and $\Delta^{\rm dir}$ is the usual Dirichlet Laplacian acting in the Hilbert space $L^2(\Omega)$.

The idea is to consider all eigenvalues $\delta_n(\Omega)$, $n\in \N$ of this problem. We note that proving the equality $\lambda_n(\Omega)=\delta_n(\Omega)$ would not be difficult if one could use the classical formulations of the problems and suitable regularity results. Indeed, the link between problem \eqref{ficheraclas} and problem \eqref{eq:DBS} is established by setting $h=\Delta u$ and using a rather simple argument.  Unfortunately, this method would require strong regularity assumptions on $\Omega$, for example one would require that $\Omega$ is at least of class $C^4$. Since we plan to require less regularity for $\Omega$, we need to use  weak formulations. This requires some care, in particular in the definition of the energy space associated with problem \eqref{ficheraclas}. 

From now on we shall always assume at least that $\Omega$ is a bounded domain with Lipschitz boundary satisfying the uniform outer ball condition, which means that there exists $r>0$ such that for any $x\in\partial \Omega$ there exists an open ball of radius $r$ such that $B\subset \mathbb{R}^N\setminus \Omega$ and $x\in \partial B$, cfr. \cite{adolfsson}.  Note that convex or $C^{1,1}$ domains are admissible. 

Following\footnote{In \cite{bucfergaz, bucgaz, fergazwet} the space $C^2_H(\bar \Omega)$ of harmonic functions of class $C^2$ up to the boundary is considered. In view of certain density arguments, we believe that $C^2_H(\bar \Omega)$ is suitable to study our problem in $C^{2,\alpha}$ domains. Having less regularity assumptions on $\Omega$ leads us to relax the assumption $u\in C^2(\bar\Omega)$ by replacing it with $u\in H^2(\Omega)$.} the idea of Ferrero, Gazzola, and Weth~\cite{bucfergaz}  we set
\begin{equation}
H^2_{\mathcal{H}}(\Omega )=\{u\in H^2(\Omega):\ \Delta u =0\},
\end{equation}
and we endow $H^2_{\mathcal{H}}(\Omega )$ with the norm defined by
$$
\| u\|_{H}=\left(\int_{\partial\Omega}u^2d\sigma\right)^{1/2}.
$$
Next, we consider the completion  $H(\Omega)$ of $(H^2_{\mathcal{H}}(\Omega ), \| \cdot \|_{H}  )$ and we note that $H(\Omega)$ is continuously  embedded in $L^2(\Omega)$.  Namely, $H(\Omega)$ is the space of all functions $u\in L^2(\Omega)$ such that there exists a Cauchy sequence $u_n\in H^2_{\mathcal{H}}(\Omega )$, $n\in \N$ (with respect to the norm $\|\cdot \|_{H}$) converging to $u$ in $L^2(\Omega)$: note that $u$ turns out to be harmonic (in the distributional sense) and $\| u\|_{H}=\lim_{n\to \infty }\| u_n\|_{H}$. 
This can be proved by  checking  that any Cauchy sequence in $H(\Omega)$ converges in $L^2(\Omega)$ and that, whenever $u_n$, $n\in \N$ is a Cauchy sequence with $u_n\to u$ in $L^2(\Omega)$ and $\| u_n\|_{H}\to 0$ as $n\to \infty$ then $u=0$ in $\Omega$. In fact, this is the case, since the apriori estimate  
 \begin{equation}\label{jerison}\| h\|_{H^{1/2}(\Omega)}\le c\, \| g\|_{L^2(\p \Omega)}\end{equation} for the Dirichlet problem
$$
\left\{
\begin{array}{ll}
\Delta h=0,& \ {\rm in}\ \Omega,\\
h=g,& \ {\rm on}\ \partial\Omega,\\
\end{array}
\right. 
$$
holds true, see \cite{jerison1, jerison2}.  Note that  given a function $h\in H(\Omega)$ it makes sense to consider its trace on $\partial \Omega$ which is uniquely identified as the limit in $L^2(\partial \Omega)$ of the trace of any Cauchy sequence $u_n\in H^2_{\mathcal{H}}(\Omega )$ converging to $h$ in $L^2(\Omega)$ and converging to some $g$ in $L^2(\partial \Omega)$: in this sense, $g$ is the trace of $h$ and we write that $h=g$ on $\partial \Omega$. 

We additionally note that the apriori estimate \eqref{jerison} allows to prove that $H(\Omega)$ is in fact continuously embedded into $H^{1/2}(\Omega)$,  where we set as usual
\[
H^{1/2}(\Omega) = \left\{ u \in L^2(\Omega) : \iint_{\Omega \times \Omega} \frac{|u(x) - u(y)|^2}{|x-y|^{N+1}} dxdy < \infty \right\};
\] 
we refer to \cite[Ch.\,VII, Thm.\,7.48]{Adams} for the definition of the fractional Sobolev spaces and their properties.  The space $H^2_{\mathcal{H}}(\Omega )$ plays the role of the energy space of problem \eqref{ficheraclas}.

 In order to formulate our problem in the weak sense, we also need a natural Hilbert space containing $H^2(\Omega)$. For this purpose, we consider the following Bergman  space\footnote{Note that this space coincides with  the classical Bergman space of harmonic functions in $L^2(\Omega)$ at least in sufficiently regular domains, see e.g. \cite[Lemma~8.8]{axler} where it is proved that harmonic polynomials are dense in the classical Bergman space in a ball.} by setting
 $$
 L^2_{\mathcal{H}}(\Omega)={\rm\ closure\  of}\ H(\Omega )\ {\rm in}\ L^2(\Omega).
 $$
Note that equivalently  $L^2_{\mathcal{H}}(\Omega)={\rm\ closure\  of}\ H^2_{\mathcal{H}}(\Omega )\ {\rm in}\ L^2(\Omega)$ by the definition of $H(\Omega )$.
We are ready to give the following definition
\begin{definition}\label{ficheraweakeq} We say that $\delta \in \R$ is a {\it dual (DBS) eigenvalue} if there exists $h\in H(\Omega)$, $h\ne 0$ such that 
\begin{equation}\label{ficheraweak}
\int_{\partial\Omega} h\psi d\sigma =\delta \int_{\Omega} h\psi dx,
\end{equation} 
for all $\psi \in H(\Omega)$. 
\end{definition}

 Up to our knowledge, the weak  formulation  \eqref{ficheraweak} appeared in this form  in \cite[\S~5]{fergazwet} for the first time.

Then we can prove the following theorem

\begin{theorem}\label{ficherahigh} The dual (DBS) eigenvalues are non-negative, have finite multiplicity, and they can be represented by means of a non-decreasing divergent sequence  $\delta_n(\Omega)$, $n\in\N$ (where  each eigenvalue is repeated according to its multiplicity), and  the  following Min-Max Principle holds: 
\begin{equation}\label{minmax}
\delta_n(\Omega)=\min_{\substack{E\subset H(\Omega)\\ {\rm dim}E=n}}\max_{u\in E}\frac{\int_{\partial \Omega} u^2d\sigma}{\int_{\Omega}u^2dx}.
\end{equation}
Moreover, there exists an orthonormal basis $h_n$, $n\in \N$, in $L^2_{\mathcal H}(\Omega)$ of eigenfunctions $h_n$ associated with the eigenvalues $\lambda_n$. 
\end{theorem}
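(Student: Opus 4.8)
The plan is to recast the weak problem \eqref{ficheraweak} as a standard spectral problem for a compact, self-adjoint, positive operator on the Hilbert space $(H(\Omega),\lgl\cdot,\cdot\rgl_H)$, where $\lgl u,v\rgl_H=\int_{\partial\Omega}uv\,d\sigma$ is the inner product inducing the norm $\|\cdot\|_H$. First I would note that, since $H(\Omega)$ embeds continuously into $L^2(\Omega)$, for each $h\in H(\Omega)$ the map $\psi\mapsto\int_\Omega h\psi\,dx$ is a bounded linear functional on $H(\Omega)$; by the Riesz representation theorem there is a unique $Th\in H(\Omega)$ with $\lgl Th,\psi\rgl_H=\int_\Omega h\psi\,dx$ for all $\psi\in H(\Omega)$. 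The operator $T$ is linear and bounded; it is self-adjoint and positive, because $(h,\psi)\mapsto\int_\Omega h\psi\,dx$ is a symmetric non-negative bilinear form; and it is injective, since $Th=0$ forces $\int_\Omega h^2\,dx=0$, hence $h=0$. With this notation \eqref{ficheraweak} reads $\lgl h,\psi\rgl_H=\delta\,\lgl Th,\psi\rgl_H$ for all $\psi$, i.e. $h=\delta\,Th$; thus $\delta$ is a dual (DBS) eigenvalue with eigenfunction $h$ if and only if $\tau:=\delta^{-1}$ is a (necessarily positive) eigenvalue of $T$ with the same eigenfunction.

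Next I would prove that $T$ is compact. By the a priori estimate \eqref{jerison} recalled above, $H(\Omega)$ is continuously embedded into $H^{1/2}(\Omega)$, and the latter embeds compactly into $L^2(\Omega)$ since $\Omega$ is bounded and Lipschitz (fractional Rellich--Kondrachov); hence the embedding $H(\Omega)\hookrightarrow L^2(\Omega)$ is compact. Moreover, for $v\in H(\Omega)$,
\begin{equation*}
\|Tv\|_H^2=\lgl Tv,Tv\rgl_H=\int_\Omega v\,Tv\,dx\le\|v\|_{L^2(\Omega)}\|Tv\|_{L^2(\Omega)}\le C\,\|v\|_{L^2(\Omega)}\|Tv\|_H,
\end{equation*}
so $\|Tv\|_H\le C\|v\|_{L^2(\Omega)}$; composing with the compact embedding shows $T$ sends bounded subsets of $H(\Omega)$ to relatively compact ones, so $T$ is compact. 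The spectral theorem for compact, injective, self-adjoint, positive operators then gives a sequence of positive eigenvalues $\tau_1\ge\tau_2\ge\cdots\to0$ (the sequence is infinite because $H(\Omega)$ is infinite-dimensional, containing infinitely many linearly independent harmonic polynomials, independent in $H(\Omega)$ by the maximum principle), each of finite multiplicity, with an associated orthonormal basis $\{h_n\}$ of $(H(\Omega),\lgl\cdot,\cdot\rgl_H)$. Setting $\delta_n:=\tau_n^{-1}$ yields the claimed non-decreasing divergent sequence of finite-multiplicity dual (DBS) eigenvalues; non-negativity is immediate from $\delta=\|h\|_H^2/\|h\|_{L^2(\Omega)}^2$.

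For the Min--Max Principle I would apply the Courant--Fischer characterization of the eigenvalues of $T$,
\begin{equation*}
\tau_n=\max_{\substack{E\subset H(\Omega)\\ \dim E=n}}\ \min_{u\in E\setminus\{0\}}\frac{\lgl Tu,u\rgl_H}{\lgl u,u\rgl_H}=\max_{\substack{E\subset H(\Omega)\\ \dim E=n}}\ \min_{u\in E\setminus\{0\}}\frac{\int_\Omega u^2\,dx}{\int_{\partial\Omega}u^2\,d\sigma},
\end{equation*}
and take reciprocals to obtain \eqref{minmax}. Finally, to produce the orthonormal basis of $L^2_{\mathcal{H}}(\Omega)$ I would set $g_n:=h_n/\|h_n\|_{L^2(\Omega)}$: from $\int_\Omega h_nh_m\,dx=\lgl Th_n,h_m\rgl_H=\tau_n\lgl h_n,h_m\rgl_H=\tau_n\delta_{nm}$ one checks that $\{g_n\}$ is $L^2$-orthonormal, and it is complete because the span of $\{h_n\}$ is $\|\cdot\|_H$-dense in $H(\Omega)$, hence (using $\|\cdot\|_{L^2(\Omega)}\le C\|\cdot\|_H$) $L^2$-dense in $H(\Omega)$, which is $L^2$-dense in $L^2_{\mathcal{H}}(\Omega)$ by definition; each $g_n$ is still an eigenfunction for $\delta_n$.

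The only non-routine ingredient is the compactness of the embedding $H(\Omega)\hookrightarrow L^2(\Omega)$: it relies on the continuous embedding $H(\Omega)\hookrightarrow H^{1/2}(\Omega)$, which is a consequence of the sharp elliptic estimate \eqref{jerison} valid under the standing uniform outer ball assumption, combined with the fractional Rellich theorem on bounded Lipschitz domains. Once this is in place, the remainder is standard Hilbert-space bookkeeping around the operator $T$, and I expect no further obstacle.
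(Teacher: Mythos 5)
Your proposal is correct, and it rests on exactly the same analytic ingredient as the paper's proof: the continuous embedding $H(\Omega)\hookrightarrow H^{1/2}(\Omega)$ coming from the a priori estimate \eqref{jerison}, combined with the compactness of $H^{1/2}(\Omega)\hookrightarrow L^2(\Omega)$ on a bounded Lipschitz domain. Where you diverge is in the spectral packaging. The paper follows Davies' quadratic-form route: it regards $Q(u,v)=\int_{\partial\Omega}uv\,d\sigma$ as a closed form on the dense subspace $H(\Omega)$ of $L^2_{\mathcal H}(\Omega)$, invokes the representation theorem to produce an unbounded self-adjoint operator $T$ with form domain $H(\Omega)$ and compact resolvent, and then cites the classical Min--Max Principle for such operators. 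You instead use the Riesz representation theorem on the Hilbert space $(H(\Omega),\langle\cdot,\cdot\rangle_H)$ to build a \emph{bounded}, compact, positive, injective, self-adjoint operator whose eigenvalues are the reciprocals $\tau_n=\delta_n^{-1}$, and then apply the spectral theorem and Courant--Fischer for compact operators. The two constructions are the standard dual viewpoints on the same problem; yours avoids the theory of unbounded operators and closed forms entirely, at the modest cost of the reciprocal bookkeeping, and it has the further merit of making explicit two points the paper leaves implicit: why the eigenvalue sequence is infinite (infinite-dimensionality of $H(\Omega)$, which you justify via harmonic polynomials and the maximum principle) and why the $L^2(\Omega)$-renormalized eigenfunctions form a complete orthonormal system in $L^2_{\mathcal H}(\Omega)$ (density of ${\rm span}\{h_n\}$ in $H(\Omega)$ for $\|\cdot\|_H$ passed through the continuous embedding into $L^2$). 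No gap to report.
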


\begin{proof}The proof is standard, here we follow the approach discussed in Davies~\cite[Ch.~4]{davies}. Namely, we consider the quadratic form 
$Q(u,v)=\int_{\partial\Omega}uvd\sigma$ defined for all $u,v\in H(\Omega)$. We note that $H(\Omega)$ is a dense subspace of the Hilbert space $L^2_{\mathcal{H}}(\Omega)$ and  the quadratic form is `closed' in the sense that the space $H(\Omega)$ is complete with the norm defined by $(Q(u,u)+\|u\|^2_{L^2(\Omega)})^{1/2}$. Accordingly, there exists a self-adjoint operator $T$ with domain $D(T)$ contained in $H(\Omega)$ such that the domain of its square root $T^{1/2}$ coincides with $H(\Omega)$ and  such that 
$Q(u,v)=\langle T^{1/2}u,T^{1/2}v\rangle_{L^2(\Omega)} $. In particular, $u\in D(T)$ if and only if $u\in D(T^{1/2})$ and $Tu\in D(T^{1/2})$ in which case 
$$
Q(u,v)=\langle T u,v\rangle_{L^2(\Omega)}
$$
for all $v \in D(T^{1/2})$. Moreover, since $H(\Omega) $ is embedded in $H^{1/2}(\Omega)$ and $H^{1/2}(\Omega)$ is compactly embedded into $L^2(\Omega)$, it follows that $H(\Omega) $  is compactly embedded into $L^2_{\mathcal{H}}(\Omega)$, hence $T$ has compact resolvent. Since the eigenvalues of $T$ are precisely the dual (DBS) eigenvalues, and the eigenfunctions are the same, the proof can be completed simply by using  the classical Min-Max Principle applied to the operator $T$. 
\end{proof}

The proof of Theorem~\ref{n-principle} below is an adaptation of the proof given in \cite[\S5]{bucfergaz} for the equality $\lambda_1(\Omega)=\delta_1(\Omega)$. However, a careful inspection of the crucial steps in the proof in  \cite[\S5]{bucfergaz} allows us to
avoid the use of the Min-Max Principle and to prove a one-to-one correspondence between eigenpairs of the two eigenvalue problems in a direct way as in  Lemma~\ref{equi} below.  Before stating and proving Lemma~\ref{equi}, we need the following technical result.

\begin{lemma}\label{approx}  Let $\Omega$ be  a  bounded Lipschitz domain  of $\R^N$ satisfying the uniform outer ball condition. Let $f\in L^2(\partial \Omega)$. Then there exists a sequence $u_n\in H^2_{\mathcal{H}}(\Omega )$, $n\in \mathbb{N}$ such that $u_n\to f$ in $L^2(\partial \Omega)$ as $n\to \infty$.
\end{lemma}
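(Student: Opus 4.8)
The statement asserts that the traces of harmonic $H^2$-functions are dense in $L^2(\partial\Omega)$. The plan is to exploit the solvability of the Dirichlet problem with low-regularity data together with a density/regularity argument at the level of boundary data. First I would recall that, by the results of Jerison and Kenig cited above (the estimate \eqref{jerison} and its companions for the $L^p$ Dirichlet problem on Lipschitz domains), the harmonic extension operator $g\mapsto h_g$ (where $h_g$ solves $\Delta h_g=0$ in $\Omega$, $h_g=g$ on $\partial\Omega$) maps $L^2(\partial\Omega)$ boundedly into $H^{1/2}(\Omega)$, and in particular the trace of $h_g$ coincides with $g$ in the nontangential-limit sense. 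This already shows that $f$ is a trace of a harmonic function of class $H^{1/2}(\Omega)$; the issue is purely the gap between $H^{1/2}$ and $H^2$ interior regularity, so it suffices to approximate $f$ in $L^2(\partial\Omega)$ by boundary data whose harmonic extensions lie in $H^2(\Omega)$.

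The key step is therefore to identify a dense subclass of $L^2(\partial\Omega)$ consisting of boundary data with $H^2$ harmonic extensions. The natural candidates are traces of functions that are smooth in a neighbourhood of $\bar\Omega$: given $\phi\in C^\infty(\R^N)$, write $\phi = h + w$ in $\Omega$, where $h$ is the harmonic function with $h=\phi$ on $\partial\Omega$ and $w$ solves the Dirichlet problem $\Delta w = \Delta\phi$ in $\Omega$, $w=0$ on $\partial\Omega$. Since $\Omega$ is convex (or $C^{1,1}$), the classical $H^2$-regularity for the Dirichlet Laplacian with $L^2$ right-hand side gives $w\in H^2(\Omega)\cap H^1_0(\Omega)$, and $\phi|_{\bar\Omega}\in H^2(\Omega)$ trivially; hence $h=\phi-w\in H^2_{\mathcal H}(\Omega)$. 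Thus every $C^\infty$-function restricted to $\partial\Omega$ is the trace of an element of $H^2_{\mathcal H}(\Omega)$. Finally I would invoke the density of $C^\infty(\R^N)|_{\partial\Omega}$ (equivalently, of $C(\partial\Omega)$) in $L^2(\partial\Omega)$: choosing $\phi_n\in C^\infty(\R^N)$ with $\phi_n\to f$ in $L^2(\partial\Omega)$ and setting $u_n := h_{\phi_n}\in H^2_{\mathcal H}(\Omega)$ yields $u_n\to f$ in $L^2(\partial\Omega)$, which is exactly the claim.

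The main obstacle is the interior regularity input: one must be sure that for the admissible class of domains (Lipschitz with the uniform outer ball condition, including convex and $C^{1,1}$) the correction term $w$ with $L^2$ right-hand side indeed belongs to $H^2(\Omega)$. For convex domains this is a classical theorem (Kadlec), and for $C^{1,1}$ domains it is standard elliptic regularity; for general Lipschitz domains with the uniform outer ball condition one should instead argue that the proof only needs $w\in H^1_0(\Omega)$ with $\nabla w$ having an $L^2$ boundary trace, so that $\partial_\nu h = \partial_\nu\phi-\partial_\nu w \in L^2(\partial\Omega)$, which is all that is used downstream; alternatively, one restricts $\phi$ to harmonic polynomials, whose traces are already dense in $L^2(\partial\Omega)$ on such domains by the $L^2$-solvability of the Dirichlet problem combined with an approximation of the harmonic extension of $f$ by dilations $h_f(\,\cdot\,/t)$ as $t\uparrow 1$, each of which is harmonic in a neighbourhood of $\bar\Omega$ and hence in $H^2_{\mathcal H}(\Omega)$. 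I expect the cleanest route in the paper is this last dilation argument, since it needs no interior $H^2$-regularity theorem at all and uses only the already-cited Dirichlet estimates.
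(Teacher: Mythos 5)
Your main argument is essentially the paper's proof: approximate $f$ in $L^2(\partial\Omega)$ by traces of $H^2$ (resp.\ smooth) functions, then subtract the $H^2$ solution of the Poisson problem with zero boundary data to obtain a harmonic $H^2$ function with the same trace. Your concern in the final paragraph about general Lipschitz domains with the uniform outer ball condition is unnecessary, since the $H^2$ regularity of the Dirichlet problem with $L^2$ right-hand side holds in exactly that class by Adolfsson's theorem (cited as \cite{adolfsson}, see also Theorem~\ref{kadlec}), so the dilation fallback is not needed.
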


\begin{proof} We consider a sequence of functions $\tilde f_n\in H^{1/2}(\partial\Omega)$ such that $\tilde f_n\to f$ in $L^2(\partial \Omega)$ and $n\to \infty$. By the Trace Theorem, we can extend $\tilde f_n$ to the whole of $\Omega$ as a function in $H^1(\Omega)$. Possibly extending $\tilde f_n$ in a neighborhood of $\Omega$ as using a mollification argument, we can replace $\tilde f_n$ with a smoother  function, hence we can assume directly that $\tilde f_n$ is $H^2(\Omega)$.  Now consider a solution to the following Poisson problem
$$
\left\{ 
\begin{array}{ll}
\Delta v_n=-\Delta \tilde f_n,&  {\rm in }\ \Omega,\\
v_n=0,&  {\rm on }\ \partial\Omega .\\
\end{array}
\right.
$$
Since $\Omega$ is a bounded Lipschitz domain satisfying the uniform outer ball condition, it is well-known that the solution $v_n$ belongs to $H^2(\Omega)$, see \cite{adolfsson}. Setting $u_n=v_n+\tilde f_n$ we conclude.

\end{proof} 

\begin{rem}
If  $\Omega$ is assumed to be convex or of class $C^{1,1}$,  we can give an alternative proof of the previous lemma by using  Fourier series associated with the second order classical Steklov problem. Indeed, let $\varphi_j$, $j\in \mathbb{N}$ be an orthonormal basis in $L^2(\partial\Omega)$ of eigenfunctions of the  problem $\Delta \varphi_j = 0$ in $\Omega$ and $\p_\nu \varphi_j = \sigma_j \varphi_j$ on $\partial \Omega$, $\sigma_j$ being the classical Steklov eigenvalues. For domains of class $C^{1,1}$, the general results in Grisvard~\cite[Ch. 2]{grisvard85} imply that $\varphi_j\in H^2(\Omega)$; for domains that are convex, but not necessarily of class $C^{1,1}$, the $H^2$ regularity of Steklov eigenfunctions has been recently  established in  \cite{LambPro2025}. Then, writing  $f = \sum_{j=1}^\infty (f, \varphi_j)_{L^2(\p \Omega)}\, \varphi_j$ and  $u_n = \sum_{j=1}^n (g, \varphi_j)_{L^2(\p \Omega)} \varphi_j$ we get  $u_n \to g$ in $L^2(\p \Omega)$ as $n\to \infty$, and $u_n \in H^2_{\mathcal{H}}(\Omega)$. It seems plausible that the $H^2$ regularity of Steklov eigenfunctions holds on bounded Lipschitz domains satisfying the uniform outer ball condition. However, we have not been able to find this result in the literature and extending the arguments in   \cite{LambPro2025} to the general case would go beyond the scopes of the present paper.
\end{rem}

\begin{lemma}\label{equi}  Let $\Omega$ be  a   bounded Lipschitz domain  of $\R^N$ satisfying the uniform outer ball condition. The following statements hold:
\begin{itemize}
\item[(i)] Assume that $(\delta , h)\in \R_{+}\times H(\Omega)$ is an eigenpair of problem \eqref{ficheraweak}. Let $u\in H^2(\Omega)\cap H^1_0(\Omega)$ be the solution of the problem
\begin{equation}\label{equi1}
\left\{
\begin{array}{ll}
\Delta u=h,& \ {\rm in}\ \Omega,\\
u=0,& \ {\rm on}\ \p \Omega .\\
\end{array}
\right. 
\end{equation}
Then $(\delta , u)\in \R_{+}\times ( H^2(\Omega)\cap H^1_0(\Omega))$ is an eigenpair of problem \eqref{eq:DBSweak}.
\item[(ii)] Assume that $(\lambda , u)\in \R_{+}\times (H^2(\Omega)\cap H^1_0(\Omega))$ is an eigenpair of problem \eqref{eq:DBSweak}. Then the function $h:=\Delta u$ belongs to $H(\Omega)$ and $(\lambda , h)$ is an eigenpair of problem \eqref{ficheraweak}.
\end{itemize}
\end{lemma}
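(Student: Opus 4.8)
The plan is to prove the two implications separately, each time using the weak formulations and the approximation Lemma~\ref{approx} to pass between the Bergman-type test space $H(\Omega)$ and the energy space $H^2(\Omega)\cap H^1_0(\Omega)$.

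\medskip

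\textbf{Proof of (i).} Given an eigenpair $(\delta,h)$ of \eqref{ficheraweak}, let $u\in H^2(\Omega)\cap H^1_0(\Omega)$ solve \eqref{equi1}; this is well posed since $h\in H(\Omega)\subset L^2(\Omega)$ and $\Omega$ satisfies the uniform outer ball condition, so the Poisson problem with $L^2$ datum has an $H^2$ solution by \cite{adolfsson}. I must show $u$ satisfies \eqref{eq:DBSweak}, i.e. $\int_\Omega \Delta u\,\Delta\varphi\,dx=\delta\int_{\p\Omega}\p_\nu u\,\p_\nu\varphi\,d\sigma$ for all $\varphi\in H^2(\Omega)\cap H^1_0(\Omega)$. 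First I would test \eqref{ficheraweak} with an arbitrary $\psi\in H^2_{\mathcal H}(\Omega)$. On the right-hand side, since $\Delta u=h$ and $\Delta\psi=0$, Green's formula gives $\int_\Omega h\psi\,dx=\int_\Omega \Delta u\,\psi\,dx=\int_{\p\Omega}(\p_\nu u\,\psi-u\,\p_\nu\psi)\,d\sigma+\int_\Omega u\,\Delta\psi\,dx=\int_{\p\Omega}\p_\nu u\,\psi\,d\sigma$, using $u=0$ on $\p\Omega$. Meanwhile the trace of $h$ on $\p\Omega$ is, by construction of $H(\Omega)$, the $L^2(\p\Omega)$-limit of the traces of any approximating sequence; but in fact $h=\Delta u$ with $u\in H^2$, so $h|_{\p\Omega}$ is the usual trace. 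Thus \eqref{ficheraweak} becomes $\int_{\p\Omega}h\,\psi\,d\sigma=\delta\int_{\p\Omega}\p_\nu u\,\psi\,d\sigma$ for all $\psi\in H^2_{\mathcal H}(\Omega)$. Now, given $\varphi\in H^2(\Omega)\cap H^1_0(\Omega)$, I would approximate its normal derivative: by Lemma~\ref{approx} there is a sequence $\psi_n\in H^2_{\mathcal H}(\Omega)$ with $\psi_n\to \p_\nu\varphi$ in $L^2(\p\Omega)$. Passing to the limit in the identity above gives $\int_{\p\Omega}h\,\p_\nu\varphi\,d\sigma=\delta\int_{\p\Omega}\p_\nu u\,\p_\nu\varphi\,d\sigma$. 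Finally, a Green's identity on the left: since $\Delta\varphi\in L^2(\Omega)$ and $\varphi=0$ on $\p\Omega$, $\int_\Omega \Delta u\,\Delta\varphi\,dx=\int_\Omega h\,\Delta\varphi\,dx=\int_{\p\Omega}h\,\p_\nu\varphi\,d\sigma$ (again $\varphi=0$ on $\p\Omega$ kills the other boundary term, and $\Delta h=0$ distributionally handles the volume term — one should justify this integration by parts for $h$ merely in $H^{1/2}(\Omega)$, e.g. by first taking $\varphi$ smooth and using density, or by noting $h=\Delta u$ with $u\in H^2$). Combining the two displays yields \eqref{eq:DBSweak}. Positivity of $\delta$ is given by hypothesis (and also follows from Theorem~\ref{ficherahigh}), and $u\ne 0$ since $h\ne 0$ and $u\mapsto\Delta u$ is injective on $H^2\cap H^1_0$.

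\medskip

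\textbf{Proof of (ii).} Conversely, let $(\lambda,u)$ solve \eqref{eq:DBSweak}. Set $h:=\Delta u\in L^2(\Omega)$. I must show $h\in H(\Omega)$ and that \eqref{ficheraweak} holds with eigenvalue $\lambda$. That $h$ is harmonic in the distributional sense follows by testing \eqref{eq:DBSweak} with $\varphi\in C_c^\infty(\Omega)$: then $\p_\nu\varphi=0$, so $\int_\Omega h\,\Delta\varphi\,dx=0$, i.e. $\Delta h=0$ in $\mathcal D'(\Omega)$, and by elliptic regularity $h\in C^\infty(\Omega)\cap L^2(\Omega)$. To see $h\in H(\Omega)$ one needs a Cauchy sequence in $H^2_{\mathcal H}(\Omega)$ converging to $h$ in $L^2(\Omega)$; the natural route is to note that $h\in L^2_{\mathcal H}(\Omega)$ (closure of $H^2_{\mathcal H}(\Omega)$ in $L^2$), and then upgrade to $H(\Omega)$-membership using that $h$ has an $L^2(\p\Omega)$ trace — indeed $h=\Delta u$ with $u\in H^2(\Omega)$ so $h$ has a genuine $H^{1/2}(\p\Omega)\hookrightarrow L^2(\p\Omega)$-ish trace, and the a priori estimate \eqref{jerison} lets one control the approximating sequence in the $\|\cdot\|_H$ norm. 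Once $h\in H(\Omega)$, fix $\psi\in H^2_{\mathcal H}(\Omega)$ and use it (or rather, use the fact that $\p_\nu$ of suitable test functions is dense) — actually here it is cleaner to reverse the computation of part (i): for $\psi\in H^2_{\mathcal H}(\Omega)$, Green's formula gives $\int_\Omega h\,\psi\,dx=\int_\Omega\Delta u\,\psi\,dx=\int_{\p\Omega}\p_\nu u\,\psi\,d\sigma$ (since $u=0$ on $\p\Omega$, $\Delta\psi=0$), and $\int_\Omega\Delta u\,\Delta\psi\,dx=0$ trivially. To recover \eqref{eq:DBSweak} I want the right test function: given $\psi\in H^2_{\mathcal H}(\Omega)$, let $\varphi_\psi\in H^2(\Omega)\cap H^1_0(\Omega)$ solve $\Delta\varphi_\psi=0$... that is not in the test class unless $\psi=0$. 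Instead, given $\psi\in H^2_{\mathcal H}(\Omega)$, extend it by solving a zero-trace correction: pick any $\varphi\in H^2(\Omega)\cap H^1_0(\Omega)$ with $\p_\nu\varphi=\psi|_{\p\Omega}$ (possible since $\psi|_{\p\Omega}\in L^2(\p\Omega)$ and, via Lemma~\ref{approx}-type constructions or trace lifting, one can realize any $H^{1/2}$ boundary normal-derivative data — more carefully, approximate). Then \eqref{eq:DBSweak} reads $\int_\Omega\Delta u\,\Delta\varphi\,dx=\lambda\int_{\p\Omega}\p_\nu u\,\psi\,d\sigma$. On the left, $\int_\Omega\Delta u\,\Delta\varphi\,dx=\int_\Omega h\,\Delta\varphi\,dx=\int_{\p\Omega}h\,\p_\nu\varphi\,d\sigma=\int_{\p\Omega}h\,\psi\,d\sigma$ (using $\varphi=0$ on $\p\Omega$, $\Delta h=0$; justified as before). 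Hence $\int_{\p\Omega}h\,\psi\,d\sigma=\lambda\int_{\p\Omega}\p_\nu u\,\psi\,d\sigma=\lambda\int_\Omega h\,\psi\,dx$ for all $\psi\in H^2_{\mathcal H}(\Omega)$, and by density of $H^2_{\mathcal H}(\Omega)$ in $H(\Omega)$ this extends to all $\psi\in H(\Omega)$, which is exactly \eqref{ficheraweak}. Finally $h\ne 0$: if $h=\Delta u=0$ then $u$ is harmonic and in $H^1_0(\Omega)$, so $u\equiv 0$, contradicting that $u$ is an eigenfunction; and $\lambda>0$ by hypothesis.

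\medskip

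\textbf{Main obstacle.} The delicate point is \emph{not} the Green's identities on smooth test functions but the handling of the boundary traces and integrations by parts for functions $h$ that a priori live only in $H^{1/2}(\Omega)$ and are harmonic: one must justify $\int_\Omega h\,\Delta\varphi\,dx=\int_{\p\Omega}h\,\p_\nu\varphi\,d\sigma$ when $\varphi\in H^2(\Omega)\cap H^1_0(\Omega)$ and $h\in H(\Omega)$. The clean way around this is to exploit that in both parts $h$ is actually $\Delta u$ for some $u\in H^2(\Omega)$ (given in (ii), constructed in (i)), so one never integrates by parts against a rough $h$ in isolation — instead one always writes $\int_\Omega h\,\Delta\varphi\,dx=\int_\Omega\Delta u\,\Delta\varphi\,dx$ and manipulates the symmetric bilinear form $\int_\Omega\Delta u\,\Delta\varphi$, which is well defined and symmetric on $H^2(\Omega)\cap H^1_0(\Omega)$, together with the elementary Green's formula $\int_\Omega(\Delta u\,\psi-u\,\Delta\psi)dx=\int_{\p\Omega}(\p_\nu u\,\psi-u\,\p_\nu\psi)d\sigma$ valid for $u\in H^2$, $\psi\in H^2_{\mathcal H}$. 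The other mild technical point is the density/approximation step reducing an arbitrary test function's normal derivative to traces of functions in $H^2_{\mathcal H}(\Omega)$, which is precisely what Lemma~\ref{approx} supplies, combined with the continuity of both sides of the identities in the $L^2(\p\Omega)$-topology of the normal derivative (the right-hand side is manifestly continuous; the left-hand side is handled by rewriting it as a boundary integral first).
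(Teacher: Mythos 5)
Your part (i) follows essentially the same route as the paper: derive the boundary identity $\int_{\p\Omega}h\psi\,d\sigma=\delta\int_{\p\Omega}\p_\nu u\,\psi\,d\sigma$ for $\psi\in H^2_{\mathcal H}(\Omega)$, upgrade it via Lemma~\ref{approx} to the statement that the $H(\Omega)$-trace of $h$ equals $\delta\,\p_\nu u$, and then pass the Green identity $\int_\Omega v_n\Delta\varphi\,dx=\int_{\p\Omega}v_n\,\p_\nu\varphi\,d\sigma$ to the limit along a Cauchy sequence $v_n\in H^2_{\mathcal H}(\Omega)$ approximating $h$ both in $L^2(\Omega)$ and in $L^2(\p\Omega)$. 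The one point you leave vague --- how to integrate by parts against the ``rough'' $h$ --- is resolved exactly by that last limiting step, not by the remark that $h=\Delta u$ with $u\in H^2(\Omega)$; since the needed Cauchy sequence is supplied by the definition of $H(\Omega)$ together with the trace identification you have already established, part (i) is sound.

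Part (ii) has a genuine gap. You need $h:=\Delta u$ to lie in $H(\Omega)$, i.e.\ to be the $L^2(\Omega)$-limit of a sequence in $H^2_{\mathcal H}(\Omega)$ that is Cauchy in $L^2(\p\Omega)$. Your justification --- that $h$ has ``a genuine $H^{1/2}(\p\Omega)$-ish trace'' because $h=\Delta u$ with $u\in H^2(\Omega)$ --- does not hold: $\Delta u$ is merely an $L^2(\Omega)$ function and carries no boundary trace from that fact alone, and the estimate \eqref{jerison} runs in the wrong direction for your purpose (it bounds the interior norm by the boundary norm, so it cannot produce boundary control of an approximating sequence from interior convergence); nor is it obvious that every $L^2$ harmonic function belongs to $L^2_{\mathcal H}(\Omega)$. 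The paper circumvents all of this by reversing the construction: it starts from the boundary datum $g:=\lambda\,\p_\nu u\in L^2(\p\Omega)$, takes $g_n\in H^2_{\mathcal H}(\Omega)$ with $g_n\to g$ in $L^2(\p\Omega)$ via Lemma~\ref{approx} (this sequence is Cauchy in $\|\cdot\|_{H}$ by construction, and \eqref{jerison}, now used in the correct direction, makes it converge in $L^2(\Omega)$ to some $h\in H(\Omega)$ with trace $g$), and only afterwards identifies $h=\Delta u$: Green's formula gives $\int_{\p\Omega}g_n\,\p_\nu v\,d\sigma=\int_\Omega g_n\Delta v\,dx$ for $v\in H^2(\Omega)\cap H^1_0(\Omega)$, so in the limit $\int_\Omega h\Delta v\,dx=\lambda\int_{\p\Omega}\p_\nu u\,\p_\nu v\,d\sigma=\int_\Omega\Delta u\,\Delta v\,dx$, and surjectivity of $\Delta\colon H^2(\Omega)\cap H^1_0(\Omega)\to L^2(\Omega)$ forces $h=\Delta u$. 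Once the trace of $h$ is known to equal $\lambda\,\p_\nu u$, the identity \eqref{ficheraweak} for $\psi\in H^2_{\mathcal H}(\Omega)$ follows directly from $\int_\Omega h\psi\,dx=\int_{\p\Omega}\p_\nu u\,\psi\,d\sigma$; your detour through a test function $\varphi$ with $\p_\nu\varphi=\psi$ on $\p\Omega$ --- which cannot be realized exactly for arbitrary $L^2(\p\Omega)$ data, and whose approximate version needs a continuity estimate you do not have --- is then unnecessary.
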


\begin{proof} We begin with statement (i). First of all we note that, under our regularity assumptions on $\Omega$,  problem \eqref{equi1} has indeed a solution in $H^2(\Omega)\cap H^1_0(\Omega)$, 
see   \cite{adolfsson}. Then  one can prove that 
\begin{equation}\label{equi2}
\int_{\partial\Omega} \Delta u \psi d\sigma =\delta \int_{\partial\Omega} \p_\nu u \,\psi d\sigma,
\end{equation} 
for all $\psi \in  H^2_{\mathcal{H}}(\Omega)$,  precisely as in \cite[\S5]{fergazwet}.  Thus, by the arbitrary choice of $\psi$ and Lemma~\ref{approx}, we deduce that 
\begin{equation}
\Delta u= \delta \, \p_\nu u 
\end{equation}
in the sense of the traces of $H(\Omega)$. Consider now a Cauchy sequence $v_n$ in $H^2_{\mathcal{H}}(\Omega)$ converging to 
$\Delta u$ in $L^2(\Omega)$ and to $\delta \p_\nu u$ in $L^2(\partial\Omega)$. By the Green's identity we get
$$
\int_{\Omega}v_n\Delta \psi dx=\int_{\partial \Omega} v_n \, \p_{\nu}\psi \, d\sigma
$$
for all $\psi\in H^2(\Omega)\cap H^1_0(\Omega)$. By passing to the limit as $n\to\infty $ in the previous equality we get 
$$
\int_{\Omega}\Delta u\Delta \psi dx=\delta \int_{\partial \Omega} \p_\nu u \, \p_\nu\psi \,d\sigma
$$
for all $\psi\in H^2(\Omega)\cap H^1_0(\Omega)$ hence $(u,\delta )$ is a (DBS) eigenpair. 

We now prove statement (ii).  Consider the function $g:= \lambda \,\p_\nu u \in L^2(\partial \Omega)$ (note that under our weak  assumptions on $\Omega$ we cannot assume that $g \in H^{1/2}(\partial \Omega)$). Using Lemma~\ref{approx}, take a sequence $g_n\in H^2_{\mathcal{H}}(\Omega)$  such that $g_n\to g$ in $L^2(\partial \Omega)$. By the estimate \eqref{jerison}, it follows that there exists $h\in H(\Omega)$ such that 
$g_n\to h$ in $L^2(\Omega)$   and $h=g$ on $\partial \Omega $ (in the sense of the space $H(\Omega)$). We now prove that $h=\Delta u$. In fact, for any $v\in H^2(\Omega)\cap H^1_0(\Omega)$ by the Green's formula we get
$$
\int_{\partial\Omega} g_n \, \p_\nu v \,d\sigma - \int_{\Omega} g_n \Delta v dx= \int_{\partial \Omega} \p_\nu g_n \, v \, d\sigma - \int_{\Omega} \Delta g_n v \,dx = 0
$$
hence
$$
\int_{\partial\Omega} g_n \p_\nu v \, d\sigma = \int_{\Omega} g_n \Delta v \,dx .
$$
By passing to the limit as $n \to \infty $ in the previous equality we get
$$
\lambda \int_{\partial \Omega} \p_\nu u \,\p_\nu v \, d\sigma = \int_{\Omega} h \Delta v \,dx.
$$
By the weak formulation of the Steklov problem we conclude that
$$
\int_{\Omega}h\Delta vdx=\int_{\Omega}\Delta u\Delta vdx.
$$
Since $v$ is arbitrary in $H^2(\Omega)\cap H^1_0(\Omega)$ we deduce that $h=\Delta u$. 

Finally, we consider an arbitrary function $v\in H^2_{\mathcal{H}}(\Omega)$ and again by the Green's formula we get
$$
\int_{\Omega} h v dx = \int_{\Omega}(\Delta u) v dx = \int_{\partial \Omega}\p_\nu u \, v \,d\sigma = \int_{\partial \Omega}\frac{g}{\lambda} v \,d\sigma = \int_{\partial \Omega}\frac{h}{\lambda}v \,d\sigma
$$
hence 
$$
\lambda \int_{\Omega}hv \,dx= \int_{\partial \Omega}hv \,d\sigma .
$$
By approximating any function in $H(\Omega)$ with functions of $H^2_{\mathcal{H}}(\Omega)$ we deduce that the previous equation is satisfied  for all $v\in H(\Omega)$ and the proof is complete.
\end{proof}

\begin{theorem}\label{n-principle}  If $\Omega $ is a bounded Lipschitz domain  of $\R^N$ satisfying the uniform outer ball condition  then  
$$\lambda_n(\Omega)=\delta_n(\Omega)
$$
for all $n\in \N$.
\end{theorem}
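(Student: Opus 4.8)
The plan is to derive the equality directly from the eigenpair correspondence of Lemma~\ref{equi}, bypassing the Min-Max Principle (which is nonetheless available through Theorem~\ref{ficherahigh}). The key observation is that Lemma~\ref{equi} does more than match individual eigenvalues: for each fixed value $\delta>0$ it provides a linear isomorphism between the two eigenspaces, so that multiplicities are automatically preserved.

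First I would check that neither problem has $0$ as an eigenvalue, so that Lemma~\ref{equi} applies to every eigenpair. For the (DBS) eigenvalues this is recalled in the Introduction ($0<\lambda_1(\Omega)$). For a dual (DBS) eigenpair $(\delta,h)$, testing \eqref{ficheraweak} with $\psi=h$ gives $\delta\int_\Omega h^2\,dx=\int_{\partial\Omega}h^2\,d\sigma=\|h\|_H^2$; if $\delta=0$ this forces $\|h\|_H=0$, hence $h=0$ in $H(\Omega)$, a contradiction. Thus every dual (DBS) eigenvalue lies in $\R_+$, and both parts (i) and (ii) of Lemma~\ref{equi} are applicable throughout.

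Next, fix $\delta>0$ and set
$$
W_\delta=\{u\in H^2(\Omega)\cap H^1_0(\Omega):\ (\delta,u)\ \text{solves}\ \eqref{eq:DBSweak}\},\qquad V_\delta=\{h\in H(\Omega):\ (\delta,h)\ \text{solves}\ \eqref{ficheraweak}\},
$$
each including $0$. Consider the linear maps $\Phi\colon V_\delta\to W_\delta$, sending $h$ to the unique solution $u$ of \eqref{equi1}, and $\Psi\colon W_\delta\to V_\delta$, $u\mapsto\Delta u$. Lemma~\ref{equi}(i) guarantees that $\Phi$ takes values in $W_\delta$ (and $\Phi h\neq0$ when $h\neq0$, since $\Delta(\Phi h)=h$); Lemma~\ref{equi}(ii) guarantees that $\Psi$ takes values in $V_\delta$ (and $\Psi u\neq0$ when $u\neq0$, since $\Delta u=0$ together with $u\in H^1_0(\Omega)$ forces $u=0$). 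Both maps are linear, and they are mutually inverse: $\Psi\Phi h=\Delta u=h$ by \eqref{equi1}, while $\Phi\Psi u$ is, by definition, the unique $H^2(\Omega)\cap H^1_0(\Omega)$-solution $w$ of $\Delta w=\Delta u$ with $w=0$ on $\partial\Omega$, and uniqueness gives $w=u$. Hence $\dim W_\delta=\dim V_\delta$ for every $\delta>0$.

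Finally, the set of (DBS) eigenvalues is $\{\delta>0:W_\delta\neq\{0\}\}$ with multiplicities $\dim W_\delta$, and the set of dual (DBS) eigenvalues is $\{\delta>0:V_\delta\neq\{0\}\}$ with multiplicities $\dim V_\delta$; by the previous step these two multisets coincide. Since $\{\lambda_n(\Omega)\}_n$ and $\{\delta_n(\Omega)\}_n$ are, by construction, the non-decreasing enumerations (with multiplicity) of this common multiset --- both divergent, by the Introduction and by Theorem~\ref{ficherahigh} respectively --- we conclude $\lambda_n(\Omega)=\delta_n(\Omega)$ for all $n\in\N$. I do not expect a genuine obstacle here: the only point needing attention is the mutual-inverse property of $\Phi$ and $\Psi$, i.e.\ that the correspondence respects whole eigenspaces and not merely individual eigenpairs, and this follows at once from uniqueness for the Dirichlet problem \eqref{equi1}.
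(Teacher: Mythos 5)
Your proof is correct and follows essentially the same route as the paper: both deduce the theorem from the eigenpair correspondence of Lemma~\ref{equi} together with a check that multiplicities are preserved. The only (minor) difference is in that check --- you exhibit mutually inverse linear isomorphisms $\Phi,\Psi$ between the eigenspaces $V_\delta$ and $W_\delta$, which is arguably cleaner and more complete than the paper's remark that the correspondence $u\mapsto\Delta u$ preserves orthogonality of eigenfunctions.
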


\begin{proof} The proof is an immediate consequence of the previous lemma combined with the observation that the multiplicity of the eigenvalues do not change in the one-to-one correspondence between the eigenpairs of the two eigenvalue problems. Indeed, if $u_1,u_2$ are two Steklov  eigenfunctions and they are orthogonal, that is  $\int_{\partial\Omega}u_1u_2d\sigma=0$ then the corresponding dual eigenfunctions $h_1=\Delta u_1$ $h_2=\Delta u_2$ are orthogonal in $L^2(\Omega)$ (and vice-versa).  
\end{proof}

The following theorem was proved for $n=1$ in  \cite[Thm.~5.1]{bucgaz} as a  generalization of the earlier result in \cite[Theorem~6]{bucfergaz} concerning the case of a sequence
of regular polygons circumscribed  about a  circle.

\begin{theorem}\label{ncontinuity}   The eigenvalues  $ \lambda_n(\Omega)$ depend with continuity  on bounded convex domains $\Omega\subset \R^N$ with respect to the Hausdorff distance  for all $n\in\N$.
\end{theorem}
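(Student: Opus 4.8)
The plan is to follow the structure of \cite[Thm.~5.1]{bucgaz}, proving the two semicontinuity inequalities separately. Fix $n\in\N$ and a sequence $\Omega_k$ of bounded convex domains converging in Hausdorff distance to a bounded convex domain $\Omega$. Since all domains involved are convex, they satisfy the uniform outer ball condition (in fact trivially, with the outer ball being a half-space limit), and one may assume after a translation and scaling that all $\Omega_k$ and $\Omega$ are contained in a fixed large ball and contain a fixed small ball; Hausdorff convergence of convex sets then also gives convergence of the boundaries and of the surface measures $\sigma_k\rightharpoonup\sigma$ in a suitable weak sense, together with $\chi_{\Omega_k}\to\chi_\Omega$ in $L^1$.

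\emph{Upper semicontinuity: $\limsup_k \lambda_n(\Omega_k)\le\lambda_n(\Omega)$.} Here I would use the direct (primal) variational characterization of $\lambda_n(\Omega)$ as the $n$-th min-max value of the Rayleigh quotient $\int_\Omega(\Delta u)^2\,dx\big/\int_{\p\Omega}(\p_\nu u)^2\,d\sigma$ over $H^2(\Omega)\cap H^1_0(\Omega)$. One fixes an $n$-dimensional optimal (or nearly optimal) subspace $E\subset H^2(\Omega)\cap H^1_0(\Omega)$ for $\Omega$, and transplants it onto $\Omega_k$. The transplantation is the delicate point: unlike the first-order case one needs test functions vanishing on $\p\Omega_k$ with controlled $H^2$ norm, so a naive restriction does not work. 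A workable device is to extend the finitely many generators of $E$ to $H^2(\R^N)$, solve on $\Omega_k$ the auxiliary Dirichlet problem $\Delta w = \Delta u$ in $\Omega_k$, $w=u$ on $\p\Omega_k$ (as in Lemma~\ref{approx}), obtaining corrected functions in $H^2(\Omega_k)\cap H^1_0(\Omega_k)$ whose $H^2$ norms and whose boundary integrals $\int_{\p\Omega_k}(\p_\nu(\cdot))^2\,d\sigma$ converge to the corresponding quantities on $\Omega$; one must also check the transplanted subspace remains $n$-dimensional for $k$ large. Feeding this into the min-max yields the $\limsup$ bound.

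\emph{Lower semicontinuity: $\liminf_k \lambda_n(\Omega_k)\ge\lambda_n(\Omega)$.} This is where the Fichera duality of Theorem~\ref{n-principle} enters: by that theorem $\lambda_n(\Omega_k)=\delta_n(\Omega_k)$, and the dual eigenvalues admit the min-max \eqref{minmax} over subspaces of harmonic functions, $\delta_n(\Omega_k)=\min_{\dim E=n}\max_{u\in E}\int_{\p\Omega_k}u^2\,d\sigma\big/\int_{\Omega_k}u^2\,dx$. One now takes near-optimal $\delta_n(\Omega_k)$-eigenfunctions $h_1^k,\dots,h_n^k\in H(\Omega_k)$, orthonormal in $L^2(\Omega_k)$. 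Using the a priori estimate \eqref{jerison} (with constant uniform over the family of convex domains — this uniformity, following \cite{bucgaz}, is a point to verify) one controls them in $H^{1/2}$, extracts convergent subsequences to harmonic limits $h_1,\dots,h_n$ on $\Omega$, passes to the limit in the numerators $\int_{\p\Omega_k}(h_i^k)^2\,d\sigma$ and denominators using the convexity-driven convergence of $\p\Omega_k$ and of volumes, checks that the limit family is still $n$-dimensional and $L^2(\Omega)$-orthonormal (so it is an admissible test family in the min-max for $\delta_n(\Omega)$), and thereby gets $\delta_n(\Omega)\le\liminf_k\delta_n(\Omega_k)$. Combining with $\lambda_n(\Omega)=\delta_n(\Omega)$ closes the argument.

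\emph{Main obstacle.} The crux, as in the first-eigenvalue case, is the lower bound, and specifically obtaining compactness of the dual eigenfunctions with constants uniform in the family of convex domains: one needs \eqref{jerison}, the embedding $H(\Omega_k)\hookrightarrow H^{1/2}(\Omega_k)$, and the compactness $H^{1/2}\hookrightarrow\hookrightarrow L^2$ to hold with a common constant, and one needs to make sense of the weak limit of harmonic functions living on moving domains (e.g.\ via interior convergence on compact subsets of $\Omega$ together with boundary-trace convergence). Handling the boundary terms $\int_{\p\Omega_k}(h_i^k)^2\,d\sigma$ in the limit — matching traces on $\p\Omega_k$ with traces on $\p\Omega$ under mere Hausdorff convergence — is the technically heaviest step; here the convexity is essential, as it forces good behaviour of the normals and of the surface measure and rules out the oscillatory boundaries responsible for the instability of $\mu_n$ recalled in the introduction.
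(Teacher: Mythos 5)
Your overall architecture is the right one --- prove the two one-sided semicontinuity inequalities and glue them with the duality $\lambda_n(\Omega)=\delta_n(\Omega)$ of Theorem~\ref{n-principle} --- but you have assigned the primal and dual formulations to the \emph{opposite} inequalities from the paper, and the swap is not innocuous: it relocates the hard ``boundary integral on a moving boundary'' issue to places where you do not resolve it. Concretely, in your upper-semicontinuity step the corrected test function $v_k\in H^2(\Omega_k)\cap H^1_0(\Omega_k)$ differs from the extension of $u$ by a harmonic corrector $z_k$ with boundary datum $u|_{\p\Omega_k}$; this datum tends to zero in $L^2(\p\Omega_k)$ but is only \emph{bounded} in $H^{3/2}(\p\Omega_k)$, so $\p_\nu z_k$ is not small in $L^2(\p\Omega_k)$ and you have no lower bound on the denominator $\int_{\p\Omega_k}(\p_\nu v_k)^2\,d\sigma$ of the Rayleigh quotient --- without that, the min--max gives nothing. (Extending by zero to a larger $\Omega_k\supset\Omega$ does not help either, since the extension leaves $H^2$.) In your lower-semicontinuity step, the lower semicontinuity of $\int_{\p\Omega_k}(h_i^k)^2\,d\sigma$ under mere interior convergence of harmonic functions on moving domains, the fact that the limits actually belong to the completion space $H(\Omega)$, and the non-degeneracy ($L^2(\Omega)$-independence) of the limiting family are all asserted rather than proved; these are exactly the points you flag as ``the technically heaviest step,'' and the proposal offers no mechanism for them.

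The paper's assignment avoids both difficulties. For the upper bound it proves $\limsup_m\delta_n(\Omega_m)\le\delta_n(\Omega)$ using the \emph{dual} min--max \eqref{minmax}: after rescaling so that $\Omega_m\subset\Omega$, the dual eigenfunctions $h_1,\dots,h_n$ of $\Omega$ (taken in $H^2(\Omega)$) simply \emph{restrict} to admissible harmonic test functions on $\Omega_m$ --- no corrector is needed --- and the boundary term sits in the numerator, so one only needs $\int_{\p\Omega_m}h_ih_j\,d\sigma\to\int_{\p\Omega}h_ih_j\,d\sigma$ together with absolute continuity of the volume integral; see \eqref{2trick}. For the lower bound it proves $\liminf_m\lambda_n(\Omega_m)\ge\lambda_n(\Omega)$ using the \emph{primal} formulation: the normalized Steklov eigenfunctions $u_{i,m}$ are bounded in $H^2$, one extracts limits $u_1,\dots,u_n\in H^2(\Omega)\cap H^1_0(\Omega)$, uses weak lower semicontinuity of $\int|\Delta\cdot|^2$ and the convergence of the boundary Gram matrix $\int_{\p\Omega_m}\p_\nu u_{i,m}\,\p_\nu u_{j,m}\,d\sigma\to\delta_{ij}$ established in \cite[Thm.~5.1]{bucgaz}, and feeds ${\rm span}\{u_1,\dots,u_n\}$ into the min--max for $\lambda_n(\Omega)$. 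If you want to salvage your own arrangement you would at minimum need a monotonicity/comparison result for surface integrals of $h^2$ over nested convex boundaries and a genuine $H^2$-up-to-the-boundary estimate for the corrector; as written, both halves of your argument have gaps precisely where the paper's choice of formulation makes the step trivial.
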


\begin{proof} We adapt the proof of \cite[Thm.~5.1]{bucgaz} to the case of arbitrary $n\in \N$. Following the proof therein, we fix a bounded convex domain $\Omega$ and we  consider a sequence 
of bounded convex domains $\Omega_m$, $m\in\N$ converging to $\Omega$ with respect to the Hausdorff distance and we  prove that 
\begin{equation}\label{limsup}
\limsup_{m\to \infty }\delta_n(\Omega_m)\le \delta_n(\Omega)
\end{equation}
and 
\begin{equation}\label{liminf}
\lambda_n(\Omega)\le \liminf_{m\to \infty }\lambda_n(\Omega_m). 
\end{equation}
Since $\lambda_n(\Omega)=\delta_n(\Omega)$ for all $n\in \N$, the continuity of $\lambda_n(\Omega)$ will follow from \eqref{limsup} and \eqref{liminf}.

We begin with proving \eqref{limsup}. By possibly multiplying $\Omega_m$ by a factor $t_m\in ]0,1[$ such that $t_m\Omega_m\subset \Omega$ and $t_m\to 1$ as $m\to\infty$, and by rescaling the resulting eigenvalues, we can directly assume that $\Omega_m\subset \Omega$, see \cite{bucgaz}.
Then, for fixed $n\in \N$ we consider a set of dual eigenfunctions $h_1, \dots , h_n\in H(\Omega)$ satisfying the normalization conditions
$$
\int_{\partial \Omega} h_ih_jd\sigma =\delta_{ij}\delta_i(\Omega),\ \ {\rm and }\ \ \int_{ \Omega} h_ih_jdx =\delta_{ij}
$$
for all $i,j=1,\dots , n$, where $\delta_{ij}$ is the  Kronecker symbol. 

 We use the restrictions to $\Omega_k$ of the functions $h_j$, $j=1,\dots ,n$ in the min-max representation of $\delta_n(\Omega_k )$ provided by 
\eqref{minmax}. Note that such  restrictions  are linearly independent.  Without loss of generality, by the definition of the space $H(\Omega)$, we may assume for simplicity that $h_j\in H^2(\Omega)$ for all $j=1,\dots, n$. By the absolute continuity of the Lebesgue integral and by the same argument in the proof of  \cite[Thm.~5.1]{bucgaz} it follows that 
\begin{equation}\label{convbd}
\int_{\Omega_m}h_ih_j dx\to \int_{\Omega }h_ih_j dx,\ \ {\rm and}\ \ 
\int_{\partial\Omega_m}h_ih_j d\sigma \to \int_{\partial \Omega}h_ih_j d\sigma 
\end{equation}
as $m\to \infty$. We set $V_n=\{\sum_{j=1}^n \alpha_jh_j:\ \sum_{j=1}^n\alpha_j^2=1,\  \alpha_j\in \mathbb{R}, \ \forall\ j=1,\dots, n\} $.
Then by \eqref{minmax} and  \eqref{convbd}, we get
\begin{multline}\label{2trick}
\delta_n(\Omega_m)\le \max_{v\in V_n}\frac{\int_{\partial\Omega_m}v^2d\sigma}{\int_{\Omega_m}v^2dx}
\le \max_{v\in V_n}\frac{\int_{\partial\Omega }v^2d\sigma +\epsilon_1(n,m)}{\int_{\Omega}v^2dx-\int_{\Omega\setminus \Omega_m}v^2dx}\\
= \max_{v\in V_n}\frac{\int_{\partial\Omega }v^2d\sigma +\epsilon_1(n,m)}{1-\int_{\Omega\setminus \Omega_m}v^2dx}
\le 
 \max_{v\in V_n}\biggl( \int_{\partial\Omega }v^2d\sigma +\epsilon_1(n,m)\biggr) \biggl(1+2\int_{\Omega\setminus \Omega_m}v^2dx\biggr)\\
 \le 
 \max_{v\in V_n}\biggl( \int_{\partial\Omega }v^2d\sigma +\epsilon(n,m)\biggr) \biggl(1+\epsilon_2(n,m)\biggr)\\
 \le \delta_n(\Omega )+\epsilon_3(n,m,\delta_n(\Omega))
\end{multline}
provided $\epsilon_2(n,m)$ is sufficiently small,   where $\epsilon_1(n,m), \epsilon_2(n,m),  \epsilon_3(n,m,\delta_n(\Omega))$ are positive numbers (depending only the quantity in brackets) that converge  to zero as  $m\to \infty$. Note that in \eqref{2trick} we have used the elementary inequality $(1-x)^{-1}\le 1+2x$ if $0<2x<1$ with $x= \int_{\Omega\setminus \Omega_m}v^2dx\le \epsilon_2(n,m)/2$.
Inequality\eqref{2trick}  implies the validity of \eqref{limsup}. We refer to \cite{BurLa} for  analogous semicontinuity
estimates in a general  framework.

We now prove \eqref{liminf}.  For fixed $n\in N$ we consider a set of Steklov eigenfunctions $u_{1,m}, \dots , u_{n,m}\in H^2(\Omega_m)\cap H^1_0(\Omega_m)$ satisfying the normalization conditions
$$
\int_{ \Omega_m} \Delta u_{i,m}\Delta u_{j,m}dx =\delta_{ij}\lambda_i(\Omega_m),\ \ {\rm and }\ \ \int_{ \partial\Omega_m} \p_\nu u_{i,m} \, \p_\nu u_{j,m} \,d\sigma =\delta_{ij},
$$
for all $i,j=1,\dots , n$.
By the same argument in \cite[Thm.~5.1]{bucgaz}, we can prove that  there exist $u_1,\dots u_n \in H^2(\Omega)\cap H^1_0(\Omega)$ such that 
\begin{equation}\label{each}
\int_{\Omega}|\Delta u_i|^2dx \le \liminf_{m\to \infty } \int_{\Omega_m}|\Delta u_{i,m}|^2dx
\end{equation}
and 
\begin{equation}
\int_{ \partial\Omega} \p_\nu u_{i} \p_\nu u_{j}\, d\sigma = \lim_{m\to\infty }\int_{ \partial\Omega} \p_\nu u_{i,m} \, \p_\nu u_{j,m} \, d\sigma = \delta_{ij},
\end{equation}
for all $i,j=1,\dots n$.
In particular, $u_1, \dots , u_n$ are linearly independent.  Moreover, by the same argument used  to prove \eqref{each}, we can prove that  if $u=\sum_{j=1}^n\alpha_ju_j$ with $\alpha_j\in \R $ and $\sum_{j=1}^n\alpha_j^2=1$ then 
\begin{multline}
\int_{\Omega}|\Delta u|^2dx \le \liminf_{m\to \infty } \int_{\Omega_m}\biggl|\Delta \sum_{i=1}^n\alpha_iu_{i,m}\biggr|^2dx \\
\le  \liminf_{m\to \infty } \int_{\Omega_m}\sum_{i=1}^n \alpha_i^2(\Delta u_{i,m})^2  dx
\le  \liminf_{m\to \infty } \lambda_n(\Omega_m) .
\end{multline}

Finally,  by writing any  $u\in {\rm span}\{ u_1,\dots ,u_n \}$ with $\|\p_\nu u\|_{L^2(\partial\Omega)}=1$ as $u=\sum_{j=1}^n\alpha_ju_j$ with $\alpha_j\in \R $ and $\sum_{j=1}^n\alpha_j^2=1$, by the previous inequality and the Min-max Principle it follows that
\begin{equation}
\lambda_n(\Omega)\le \max_{\substack{u\in \langle u_1,\dots ,u_n\rangle \\ \| u_{\nu} \|_{L^2(\partial\Omega)} =1} }\int_{\Omega}|\Delta u|^2dx \le   \liminf_{m\to \infty } \lambda_n(\Omega_m) 
  \end{equation}
and the proof is complete. 
\end{proof}

\section{The new and the classical Babu\v{s}ka paradoxes} \label{sec:classicalBab}

The classical Babu\v{s}ka paradox \cite{bab1961}  for the hinged plate is an  example of   unexpected  asymptotic behavior for the solutions of a boundary value problem with respect to domain perturbations. Both the classical paradox and the paradox for the  fourth-order Steklov problem can be explained in the same way and are based on the following main result that can be found in \cite[Thm.~2.2.1]{grisvard92}

\begin{theorem}[Grisvard] \label{thm:Gris}
If $P$ is a polygon in $\R^2$ then 
\[
\int_P |D^2 u|^2dx = \int_P |\Delta u|^2dx
\]
for all $u \in H^2(P) \cap H^1_0(P)$.
\end{theorem}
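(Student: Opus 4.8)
The plan is to reduce the identity to an integration-by-parts computation and then deal with the boundary terms, which is where all the subtlety of the polygonal case lives. For a smooth function $u$ on a bounded domain $U\subset\R^2$ one has the pointwise identity $|\Delta u|^2 - |D^2 u|^2 = 2\bigl(\p_{11}u\,\p_{22}u - (\p_{12}u)^2\bigr) = 2\det D^2 u$, the (twice) Monge--Ampère/Hessian determinant. The key classical fact is that this quantity is a null Lagrangian: it can be written in divergence form as
\[
\det D^2 u = \tfrac12\,\Div\!\bigl( \p_1 u\,\nabla(\p_2 u)^{\perp} \ \text{-type field}\bigr),
\]
more precisely $2\det D^2u = \p_1\!\bigl(\p_1 u\,\p_{22}u - \p_2 u\,\p_{12}u\bigr) + \p_2\!\bigl(\p_2 u\,\p_{11}u - \p_1 u\,\p_{12}u\bigr)$. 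Hence $\int_P\bigl(|\Delta u|^2 - |D^2u|^2\bigr)dx$ equals a boundary integral over $\p P$ of an expression that is quadratic in the first and second derivatives of $u$ evaluated on the edges.

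\medskip

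The second step is to show that this boundary integral vanishes when $u\in H^2(P)\cap H^1_0(P)$ and $P$ is a polygon. On each open edge $e$ of $P$ the boundary is flat, so one can choose coordinates $(\tau,\nu)$ tangential and normal to $e$; the Dirichlet condition $u=0$ on $e$ forces the tangential derivatives $\p_\tau u = \p_{\tau\tau}u = 0$ along $e$, so on $e$ the full gradient is $\nabla u = (\p_\nu u)\,\nu$ and the Hessian restricted to the edge is controlled by $\p_{\nu\nu}u$ and $\p_{\tau\nu}u$ only. Plugging these relations into the null-Lagrangian boundary field and using that $\nu$ is \emph{constant} along a straight edge, one checks the boundary integrand is a pure tangential derivative of a function that vanishes at the endpoints of $e$ (again because $u\in H^1_0$), so each edge contributes zero. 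Summing over the finitely many edges gives the identity. To make the computation on a corner-free edge rigorous at $H^2$ regularity one first proves the identity for $u\in C^\infty(\overline P)\cap H^1_0(P)$ (or for $u$ smooth up to each edge), and then passes to the limit using density of such functions in $H^2(P)\cap H^1_0(P)$, together with the continuity of both sides of the asserted identity with respect to the $H^2(P)$ norm.

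\medskip

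The main obstacle, and the reason the statement is special to polygons (and fails for smooth curved boundaries, which is precisely the Babu\v{s}ka paradox), is the behaviour at the vertices: the integration-by-parts identity is valid on $P$ minus small neighbourhoods of the corners, and one must argue that the contributions of those neighbourhoods tend to zero as the neighbourhoods shrink. For $u\in H^2(P)\cap H^1_0(P)$ this follows because $D^2u\in L^2$ gives no concentration of mass at a point and the boundary field is integrable; on a curved boundary, by contrast, $\nu$ is non-constant and the term involving $\p_\tau\nu$ (i.e. the curvature) survives, producing the extra curvature contribution. I would therefore organise the proof as: (1) the divergence identity for smooth $u$ on $P$ with the corners excised; (2) identification and vanishing of the edge integrals using the homogeneous Dirichlet data and flatness of each edge; (3) a corner-neighbourhood estimate showing the excised pieces are negligible; (4) density of smooth functions to conclude for all $u\in H^2(P)\cap H^1_0(P)$. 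Alternatively, one may simply cite \cite[Thm.~2.2.1]{grisvard92} as stated in the excerpt, since the result is attributed there.
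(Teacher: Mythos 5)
The paper does not actually prove Theorem~\ref{thm:Gris}: it is quoted from \cite[Thm.~2.2.1]{grisvard92}, so the only ``proof'' on record here is the citation. Your plan reproduces the strategy behind Grisvard's argument: the null-Lagrangian identity $|\Delta u|^2-|D^2u|^2=2\det D^2u=\Div F$ with $F=\mathrm{cof}(D^2u)\,\nabla u$, the vanishing of the flux on each straight edge because $u=0$ there forces $\p_\tau u=\p_{\tau\tau}u=0$, and the observation that the curvature term is exactly what survives on a smooth boundary. All of this is correct; in fact on a flat edge the integrand $F\cdot\nu=\p_\nu u\,\p_{\tau\tau}u-\p_\tau u\,\p_{\tau\nu}u$ vanishes identically, not merely as a tangential derivative of something vanishing at the endpoints.

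However, the two steps you treat as routine are where the theorem's actual content lives, and as justified they would not close. First, the corner excision: from $F\in L^1(P)$ alone one cannot conclude that the flux through the arcs $\{|x-V|=\eps\}$ around a vertex $V$ tends to zero. Setting $h(r)=\int_{\{|x-V|=r\}\cap P}|F|\,d\sigma$, integrability of $F$ only gives $h\in L^1(0,\delta)$, which is perfectly compatible with $\liminf_{r\to0}h(r)>0$ (even $=+\infty$); so ``$D^2u\in L^2$ gives no concentration of mass at a point'' is not a proof, and one must exploit the specific structure $F=\mathrm{cof}(D^2u)\nabla u$ together with the vanishing of $u$ on both edges meeting at $V$. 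Second, the density step: you need functions smooth up to the boundary and \emph{vanishing on $\p P$} to be dense in $H^2(P)\cap H^1_0(P)$; this is not the standard density of $C^\infty(\ov P)$ in $H^2(P)$, since mollification destroys the boundary condition, and near a reentrant corner the question is genuinely delicate. Grisvard's proof sidesteps both difficulties at once: he proves a Green-type identity expressing $\int_P 2\det D^2v\,dx$ as a sum over the edges $\Gamma_j$ of duality pairings between traces of first derivatives (in $H^{1/2}(\Gamma_j)$) and tangential derivatives of such traces (in the dual space), valid for \emph{all} $v\in H^2(P)$ by density of $C^\infty(\ov P)$ in $H^2(P)$ — the corner behaviour being absorbed into the precise choice of dual spaces on each edge — and only afterwards imposes $u\in H^1_0(P)$ at the level of traces, which kills every edge term. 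If you want a self-contained proof rather than the citation, that is the route to follow; as written, items (3) and (4) of your outline remain open.
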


Note that the convexity of $P$ is not required in the previous theorem but is relevant in other arguments used in the sequel. On the other hand, we warn the reader that assuming that $u\in H^2(P) $ is essential in Theorem~\ref{thm:Gris}  since it is well-known that in non-convex polygons $P$ a function may have the Laplacian in $L^2(P)$ without being in $H^2(P)$, something that does not happen on convex domains. Theorem~\ref{thm:Gris} allows to prove the following

\begin{corollary} \label{cor:Gris} If $P$ is a polygon in $\R^2$ then the two eigenvalue problems \eqref{eq:DBSweak} and \eqref{eq:MDBSweak} are the same. In particular,
$
\lambda_n(P)=\mu_n(P) 
$
for all $n\in \N$. 
\end{corollary}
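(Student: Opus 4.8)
The plan is to deduce Corollary~\ref{cor:Gris} directly from Grisvard's Theorem~\ref{thm:Gris} by comparing the two weak formulations \eqref{eq:DBSweak} and \eqref{eq:MDBSweak}. First I would recall that both problems are posed on the same Hilbert space $H^2(P)\cap H^1_0(P)$ and differ only in the bilinear form on the left-hand side: the $(\textup{DBS})$ problem uses $a(u,\varphi)=\int_P\Delta u\,\Delta\varphi\,dx$, while the $(\textup{MDBS})$ problem uses $b(u,\varphi)=\int_P D^2u:D^2\varphi\,dx$. The right-hand side, $\int_{\partial P}\p_\nu u\,\p_\nu\varphi\,d\sigma$, is literally identical. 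So it suffices to show $a=b$ as bilinear forms on $H^2(P)\cap H^1_0(P)$.

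The key step is the polarization argument: Theorem~\ref{thm:Gris} gives $a(u,u)=b(u,u)$ for every $u\in H^2(P)\cap H^1_0(P)$, i.e. the two associated quadratic forms coincide. Since both $a$ and $b$ are symmetric bilinear forms, the polarization identity $a(u,\varphi)=\tfrac14\bigl(a(u+\varphi,u+\varphi)-a(u-\varphi,u-\varphi)\bigr)$, and the analogous identity for $b$, together with the fact that $u\pm\varphi\in H^2(P)\cap H^1_0(P)$ whenever $u,\varphi$ are, immediately yield $a(u,\varphi)=b(u,\varphi)$ for all test pairs. Consequently $u\in H^2(P)\cap H^1_0(P)$ satisfies \eqref{eq:DBSweak} with parameter $\la$ if and only if it satisfies \eqref{eq:MDBSweak} with $\mu=\la$; the two eigenvalue problems are \emph{literally} the same problem. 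The equality $\lambda_n(P)=\mu_n(P)$ for all $n\in\N$ then follows because the ordered eigenvalue sequences, defined via the same Min--Max principle applied to the same pair of forms, must coincide with multiplicities.

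I do not expect any genuine obstacle here, since Theorem~\ref{thm:Gris} is quoted as given and does the real work; the only point deserving a word of care is that polarization is being applied to forms defined on a space (namely $H^2(P)\cap H^1_0(P)$) that is stable under linear combinations, which is clear, and that one should note — exactly as the surrounding text already emphasises — that membership in $H^2(P)$ is part of the hypothesis, so there is no subtlety about non-$H^2$ functions with $L^2$ Laplacian on re-entrant corners. One could also phrase the conclusion at the operator level, observing that the two forms generate the same self-adjoint operator on the relevant quotient/trace structure, but the polarization argument already settles everything at the level of weak formulations, so this refinement is optional.

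\begin{proof}
By Theorem~\ref{thm:Gris}, for every $u\in H^2(P)\cap H^1_0(P)$ we have
\[
\int_P |D^2 u|^2\,dx = \int_P |\Delta u|^2\,dx .
\]
The space $H^2(P)\cap H^1_0(P)$ is a linear subspace of $H^2(P)$, so if $u,\varphi\in H^2(P)\cap H^1_0(P)$ then $u\pm\varphi\in H^2(P)\cap H^1_0(P)$ as well. Applying the identity above to $u+\varphi$ and to $u-\varphi$ and using the polarization identity for the symmetric bilinear forms $(u,\varphi)\mapsto\int_P D^2u:D^2\varphi\,dx$ and $(u,\varphi)\mapsto\int_P \Delta u\,\Delta\varphi\,dx$, we obtain
\[
\int_P D^2 u : D^2 \varphi\,dx = \int_P \Delta u\,\Delta\varphi\,dx
\qquad\text{for all } u,\varphi\in H^2(P)\cap H^1_0(P).
\]
Hence the left-hand sides of the weak formulations \eqref{eq:DBSweak} and \eqref{eq:MDBSweak} coincide, while the right-hand sides are identical by inspection. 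Therefore a pair $(u,\lambda)\in \bigl(H^2(P)\cap H^1_0(P)\bigr)\times\R$ solves \eqref{eq:DBSweak} if and only if $(u,\lambda)$ solves \eqref{eq:MDBSweak}; that is, the two eigenvalue problems are the same. In particular they have the same eigenvalues with the same multiplicities, and since $\lambda_n(P)$ and $\mu_n(P)$ are obtained by ordering these eigenvalues (counted with multiplicity) via the same Min--Max principle, we conclude $\lambda_n(P)=\mu_n(P)$ for all $n\in\N$.
\end{proof}
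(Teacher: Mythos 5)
Your proposal is correct and follows exactly the paper's own argument: apply Grisvard's Theorem~\ref{thm:Gris} together with the polarization identity to conclude that the bilinear forms $\int_P \Delta u\,\Delta\varphi\,dx$ and $\int_P D^2u:D^2\varphi\,dx$ coincide on $H^2(P)\cap H^1_0(P)$, so the two weak eigenvalue problems are literally identical and their ordered eigenvalue sequences agree. No gaps; the extra remarks about linearity of the test space and the $H^2$ hypothesis are fine but not needed beyond what the paper already states.
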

\begin{proof}
By the polarization identity and Theorem~\ref{thm:Gris} it follows that 
\[
\int_{P} D^2 u : D^2 \varphi dx= \int_{P} \Delta u \Delta \varphi dx, \qquad \forall \varphi   \in H^2(P) \cap H^1_0(P)
\]
hence the two problems  \eqref{eq:DBSweak} are \eqref{eq:MDBSweak} coincide. The fact that  $P$ is bounded and  has a Lipschitz boundary allows to conclude that  the eigenvalues $\mu_n(P)$ exist hence $\lambda_n(P)$  exist as well and coincide with $\mu_n(P)$.  \end{proof}

\begin{rem}
Theorem \ref{thm:Gris} implies the stronger conclusion that the elastic energy of the plate with Poisson coefficient $\sigma$
\[
E(u) = \int_P \big( \sigma |\Delta u|^2 + (1-\sigma) |D^2 u|^2 \big) dx
\]
does not depend on $\sigma$ whenever $P$ is a polygon.
\end{rem}

By Theorems~\ref{ncontinuity} and Corollary~\ref{cor:Gris} we immediately deduce the following

\begin{corollary}[Babu\v{s}ka Paradox for (MDBS)] Let $\Omega$ be a a bounded convex domain in $\R^2$ such that $\lambda_n(\Omega)\ne \mu_n(\Omega)$ for some $n\in \mathbb{N}$. Let $\{P_k\}_{k}$ be a sequence of convex polygons converging to $\Omega$ with respect to the Hausdorff distance. Then 
$$
\lim_{k\to \infty }\mu_n(P_k)=\lim_{k\to\infty}\lambda_n(P_k)=\lambda_n(\Omega)\ne \mu_n(\Omega).
$$
\end{corollary}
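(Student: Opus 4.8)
The plan is to chain together the two main results already established in the paper: the continuity of the eigenvalues $\lambda_n$ on convex domains (Theorem~\ref{ncontinuity}) and the coincidence $\lambda_n(P)=\mu_n(P)$ of the (DBS) and (MDBS) spectra on polygons (Corollary~\ref{cor:Gris}). No new ingredient is needed.

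First I would observe that, since each $P_k$ is a convex polygon in $\R^2$, Corollary~\ref{cor:Gris} gives $\lambda_n(P_k)=\mu_n(P_k)$ for every $k\in\N$. Next, since $\{P_k\}_k$ is a sequence of bounded convex domains converging to the bounded convex domain $\Omega$ with respect to the Hausdorff distance, Theorem~\ref{ncontinuity} yields $\lim_{k\to\infty}\lambda_n(P_k)=\lambda_n(\Omega)$. Combining these two facts gives
$$
\lim_{k\to\infty}\mu_n(P_k)=\lim_{k\to\infty}\lambda_n(P_k)=\lambda_n(\Omega),
$$
while $\lambda_n(\Omega)\ne\mu_n(\Omega)$ is precisely the standing hypothesis on $\Omega$. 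This completes the argument.

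There is essentially no obstacle in the deduction itself; the entire content of the statement is carried by Theorem~\ref{ncontinuity}, whose proof is the technical core of Section~\ref{fichsec}, and by Grisvard's identity (Theorem~\ref{thm:Gris}) behind Corollary~\ref{cor:Gris}. The only points worth emphasising are that the hypothesis is non-vacuous and that the approximating sequences exist: every bounded convex planar domain is the Hausdorff limit of a sequence of convex polygons (for instance polygons with vertices chosen on $\partial\Omega$ along an increasingly fine net), and domains with $\lambda_n(\Omega)\ne\mu_n(\Omega)$ do occur, the unit disk being the model example since there $\lambda_n(D)=\mu_n(D)+1$ as a consequence of the curvature identity $\Delta u|_{\partial\Omega}=\partial^2_{\nu\nu}u+\cH\,\partial_\nu u+\Delta_{\partial\Omega}u$ recalled in the introduction. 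For the proof of the corollary as stated, however, nothing beyond the two cited results is required.
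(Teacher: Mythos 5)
Your argument is exactly the paper's: the corollary is deduced immediately by combining Theorem~\ref{ncontinuity} (Hausdorff continuity of $\lambda_n$ on convex domains) with Corollary~\ref{cor:Gris} ($\lambda_n(P_k)=\mu_n(P_k)$ for convex polygons), and your additional remarks on non-vacuousness match the paper's subsequent observation about balls. The proposal is correct and takes essentially the same approach.
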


Note that the condition $\lambda_n(\Omega)\ne \mu_n(\Omega)$ is satisfied for all $n\in {\mathbb{N}}$ in any $N$-dimensional ball. Indeed, we have the following

\begin{theorem}\label{nshift} Let $B$ be the  unit ball in $\R^N$. Then  
$$
\lambda_n(B)=2n-1+N-1\ \ \ {and}\ \ \ \mu_n(B)=2n-1
$$
for all $n\in\N$ and the corresponding eigenfunctions are the same and are of the form  $u_n(x)=|1-x|^2\psi_n(x)$ where $\psi_n$ is any homogeneous harmonic polynomial of degree $n-1$. 
\end{theorem}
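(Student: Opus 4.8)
The plan is to diagonalise both eigenvalue problems on $B$ block by block along the decomposition of harmonic functions into homogeneous harmonic polynomials, and then to deduce $\mu_n(B)$ from $\la_n(B)$ by means of the curvature identity recalled in the Introduction; for the (DBS) eigenvalues one may alternatively appeal to the dual problem of Section~\ref{fichsec}. Throughout, let $\cV_k$ denote the finite-dimensional space of homogeneous harmonic polynomials of degree $k$ on $\R^N$, and understand $n$ as in the statement, i.e.\ as an index running over the distinct eigenvalues (equivalently over the blocks).

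First I would determine which functions can occur as eigenfunctions. Any eigenfunction of either problem is biharmonic in $B$ and vanishes on $\p B$, so $h:=\Delta u$ is harmonic in $B$. Expanding $h=\sum_{k\ge0}P_k$ with $P_k\in\cV_k$, and using Euler's identity $x\cdot\nabla P_k=kP_k$ together with the elementary computation $\Delta(|x|^2P_k)=2NP_k+4\,x\cdot\nabla P_k=2(N+2k)P_k$, one checks that $u=-\sum_{k\ge0}\frac{1}{2(N+2k)}(1-|x|^2)P_k$ is the unique solution of $\Delta u=h$, $u|_{\p B}=0$, the only harmonic function vanishing on $\p B$ being $0$. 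Hence the biharmonic functions in $H^2(B)\cap H^1_0(B)$ are exactly the functions $(1-|x|^2)Q$ with $Q$ harmonic in $B$, and this space is the direct sum of the blocks $(1-|x|^2)\cV_k$, $k\ge0$, which are mutually orthogonal for each of the three bilinear forms $\int_B\Delta u\,\Delta\varphi\,dx$, $\int_B D^2u:D^2\varphi\,dx$, $\int_{\p B}\p_\nu u\,\p_\nu\varphi\,d\sigma$ (orthogonality of spherical harmonics of different degrees, both on $\p B$ and radially in $B$). It is therefore enough to evaluate the Rayleigh quotients of \eqref{eq:DBSweak} and \eqref{eq:MDBSweak} on a single $u=(1-|x|^2)P_k$.

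Next comes the block computation. Writing $P_k=r^kY_k$ with $r=|x|$ and $Y_k=P_k|_{\p B}$, so that $u=(r^k-r^{k+2})Y_k$, and using that $\nu=x$ and $\p_\nu u=\p_r u$ at $r=1$ on $\p B$, a direct differentiation gives
\[
\p_\nu u|_{\p B}=-2\,P_k|_{\p B},\qquad \Delta u|_{\p B}=-2(N+2k)\,P_k|_{\p B},\qquad \p^2_{\nu\nu}u|_{\p B}=-(4k+2)\,P_k|_{\p B}.
\]
Since $(1-|x|^2)\cV_k$ consists of polynomials, the weak problems restricted to it coincide with their classical counterparts; hence the (DBS) condition $\Delta u=\la\,\p_\nu u$ on $\p B$ forces $\la=N+2k$, the (MDBS) condition $\p^2_{\nu\nu}u=\mu\,\p_\nu u$ forces $\mu=2k+1$, and conversely every $u=(1-|x|^2)P_k$ with $P_k\in\cV_k$ is a genuine eigenfunction. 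For the (DBS) part one may bypass the boundary conditions altogether: by Theorem~\ref{n-principle}, $\la_n(B)=\delta_n(B)$, and the Rayleigh quotient of \eqref{ficheraweak} on the block $\cV_k$ equals $\int_{\p B}P_k^2\,d\sigma\,\big/\,\int_B P_k^2\,dx=(\int_0^1 r^{2k+N-1}\,dr)^{-1}=2k+N$, the angular factors cancelling.

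Finally, labelling the blocks by $n=k+1$ gives $\la_n(B)=N+2(n-1)=2n-1+N-1$ with eigenspace $(1-|x|^2)\cV_{n-1}$, and $\mu_n(B)=2(n-1)+1=2n-1$ with the \emph{same} eigenspace, so every eigenfunction has the asserted form $u_n=(1-|x|^2)\psi_n$ with $\psi_n\in\cV_{n-1}$ arbitrary; the coincidence of eigenfunctions and the shift $\mu_n(B)=\la_n(B)-(N-1)$ are also transparent from the formula $\Delta u|_{\p B}=\p^2_{\nu\nu}u+\cH\,\p_\nu u+\Delta_{\p B}u$, since on $\p B$ one has $\cH=N-1$ and $\Delta_{\p B}u=0$ (as $u|_{\p B}\equiv0$). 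The only point I expect to require genuine care is the first step: verifying that the admissible \emph{weak} eigenfunctions are exhausted by the blocks $(1-|x|^2)\cV_k$, i.e.\ that the spherical-harmonic expansion of $h=\Delta u$ converges in the norms at play and that the block splitting is orthogonal and complete for all three bilinear forms. Once that is settled, everything else reduces to the elementary identities above.
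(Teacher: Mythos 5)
Your computations are all correct, but your route is genuinely different from the paper's. The paper disposes of this theorem in three lines: it quotes the explicit spectrum and eigenfunctions of the (DBS) problem on the ball from Ferrero--Gazzola--Weth \cite[Thm.~1.3]{fergazwet}, and then reads off $\mu_n(B)=\lambda_n(B)-(N-1)$ from the identity $\Delta u|_{\p B}=\p^2_{\nu\nu}u+(N-1)\p_\nu u$, valid when $u|_{\p B}=0$ --- which is precisely the ``transparent'' observation you relegate to your closing sentence. What you do instead is re-derive the quoted result from scratch by block-diagonalising both forms along $\bigoplus_{k}(1-|x|^2)\cV_k$; your boundary values $\p_\nu u=-2P_k$, $\Delta u=-2(N+2k)P_k$, $\p^2_{\nu\nu}u=-(4k+2)P_k$ on $\p B$, and the dual Rayleigh quotient $2k+N$, are all right, and they buy a self-contained proof together with an independent cross-check through Theorem~\ref{n-principle}. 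The one step you flag but do not close --- completeness and orthogonality of the blocks in the weak setting --- is exactly what the citation to \cite{fergazwet} covers in the paper. If you want to close it yourself, the cleanest path is the one you half-suggest: for (DBS) pass to the dual problem, where the form $\int_{\p B}h^2\,d\sigma$ on $H(B)$ is diagonal on the spaces $\cV_k$, which are mutually orthogonal in both $L^2(B)$ and $L^2(\p B)$ and whose span is dense in the Bergman space of the ball (the paper's own footnote cites \cite[Lemma~8.8]{axler} for this), so the spectrum is exhausted; the (MDBS) spectrum then follows at no extra cost because the two problems share the same energy space of biharmonic functions and the same complete system of eigenfunctions. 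Two minor points: you are right to read $n$ as indexing the blocks (distinct eigenvalues) rather than eigenvalues repeated according to multiplicity --- otherwise $\lambda_n(B)=2n-2+N$ fails for $n\ge 2$, since $N+2k$ has multiplicity $\dim\cV_k$ --- and your $(1-|x|^2)\psi_n$ silently corrects the statement's $|1-x|^2\psi_n$, which is evidently a typo.
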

\begin{proof}
 The eigenvalues $\lambda_n(B)$ and the corresponding eigenfunctions are provided by \cite[Thm.~1.3]{fergazwet}. The  formula for $\mu_n(B)$ follows by the formula for $\lambda_n(B)$ and by observing that 
$\Delta u|_{\p \Omega} = \p^2_{\nu \nu} u + (N-1) \p_\nu u
$ on $\partial B$ if $u=0$ on  $\partial B$. Obviously, $\lambda_n(B)$ and $\mu_n(B)$ have the same eigenfunctions.  
\end{proof}

\subsection{The classical paradox: the spectral version}In this section, we consider the spectral version of the classical  Babu\v{s}ka paradox in more detail.  Clearly, the spectral problem corresponding to \eqref{hingednav} is

\begin{equation}\label{hingednaveige}
\begin{cases}
\Delta^2 u = \xi u, \quad &\textup{in $\Omega$,} \\
u = 0, \quad & \textup{on $\p\Omega$,} \\
\Delta u = 0, \quad & \textup{on $\p\Omega$.}
\end{cases}
\end{equation}

Note that one natural way of formulating problem \eqref{hingednav} and \eqref{hingednaveige} is to consider them as problems associated with the square  of the Dirichlet Laplacian. Recall that the Dirichlet Laplacian $-\Delta_{\Omega} ^{\rm dir}$  is the selfadjoint operator in $L^2(\Omega)$ with domain 
$$
D(-\Delta_{\Omega} ^{\rm dir})= H(\Delta , \Omega )\cap H^1_0(\Omega)
$$
where $H(\Delta , \Omega )$ is the space of functions $u\in L^2(\Omega)$ with distributional Laplacian $\Delta u\in L^2(\Omega)$, and  it is defined by $-\Delta_{\Omega}^{\rm dir}u=-\Delta u$ for all $u\in H(\Delta , \Omega )\cap H^1_0(\Omega)$. 

The square   $(-\Delta_{\Omega} ^{\rm dir})^2$ of $-\Delta_{\Omega} ^{\rm dir}$  is the operator with domain given by 
$$
D((-\Delta_{\Omega} ^{\rm dir})^2)=\{u\in  H(\Delta , \Omega )\cap H^1_0(\Omega):\ \Delta u\in H(\Delta , \Omega )\cap H^1_0(\Omega)\}
$$
and it is defined by $(-\Delta_{\Omega} ^{\rm dir})^2u =\Delta (\Delta u)$. It turns out that given $f\in L^2(\Omega)$ a function $u\in D((-\Delta_{\Omega_n} ^{\rm dir})^2)$ is a solution to the problem 
$$
(-\Delta_{\Omega} ^{\rm dir})^2u=f
$$
if and only if $u\in H(\Delta , \Omega )\cap H^1_0(\Omega)$ and 
\begin{equation} \label{squareweak}
\int_\Omega \Delta u\, \Delta \varphi dx=  \int_{ \Omega}  f \, \varphi  dx
\end{equation}
for all $\varphi \in  H(\Delta , \Omega )\cap H^1_0(\Omega)$.  Equivalently, the solution $u$  to \eqref{squareweak} can be defined as the solution to the system 
\begin{equation}\label{eq:Dirsquared}
\begin{cases}
-\Delta u  = v, \quad &\textup{in $\Omega$,} \\
-\Delta v  = f, \quad &\textup{in $\Omega$,} \\
u = 0, \quad & \textup{on $\p \Omega$,} \\
v= 0, \quad & \textup{on $\p \Omega$.}
\end{cases}
\end{equation}
with $u,v\in  H(\Delta , \Omega )\cap H^1_0(\Omega)$. 

Thus, the eigenvalue problem \eqref{hingednaveige} can be understood as follows:  find $u\in H(\Delta , \Omega )\cap H^1_0(\Omega)$ and $\xi \in \R$ such that 
\begin{equation} \label{squareweakeige}
\int_\Omega \Delta u\, \Delta \varphi dx=  \xi \int_{ \Omega}  u \, \varphi  dx
\end{equation}
for all $\varphi \in  H(\Delta , \Omega )\cap H^1_0(\Omega)$.

If we denote the eigenvalues of the Dirichlet Laplacian by $\lambda_n^{\rm dir}(\Omega)$, it turns out that the eigenvalues $\xi$ of problem \eqref{squareweak} form a sequence
$$
0<\xi_1(\Omega)\le \dots  \le \xi_n(\Omega)\le \dots 
$$
given by
$$
\xi_n(\Omega)=(\lambda_n^{\rm dir}(\Omega))^2.
$$

The following well-known regularity result by Adolfsson~\cite{adolfsson} is fundamental in what follows since it allows to identify the space $H(\Delta , \Omega )\cap H^1_0(\Omega)$ in  \eqref{squareweakeige}.

\begin{theorem}\label{kadlec} If $\Omega $ is a bounded domain in $\R^N$ with Lipschitz boundary satisfying the uniform outer ball condition, then 
$$
H(\Delta , \Omega )\cap H^1_0(\Omega)=H^2( \Omega )\cap H^1_0(\Omega)
$$
and the corresponding norms are equivalent.
\end{theorem}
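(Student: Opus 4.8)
The plan is to prove the non-trivial inclusion $H(\Delta,\Omega)\cap H^1_0(\Omega)\subseteq H^2(\Omega)\cap H^1_0(\Omega)$ together with the a priori bound $\|u\|_{H^2(\Omega)}\le C\,\|\Delta u\|_{L^2(\Omega)}$; the opposite inclusion and the estimate $\|u\|_{L^2(\Omega)}+\|\Delta u\|_{L^2(\Omega)}\le C'\|u\|_{H^2(\Omega)}$ are immediate from the very definition of $H^2(\Omega)$, so the set equality and the equivalence of norms follow at once. The heart of the matter is therefore a quantitative $H^2$-regularity estimate for the Dirichlet problem.

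I would first establish this estimate on smooth domains. If $\Omega$ is of class $C^\infty$ and $u\in H^2(\Omega)\cap H^1_0(\Omega)$, the condition $u=0$ on $\partial\Omega$ forces the tangential gradient of $u$ to vanish on $\partial\Omega$, and a two-fold integration by parts (the same computation underlying Theorem~\ref{thm:Gris}, see \cite{grisvard92}) yields the identity
\[
\int_\Omega |D^2 u|^2\,dx=\int_\Omega |\Delta u|^2\,dx-\int_{\partial\Omega}\cH\,|\p_\nu u|^2\,d\sigma .
\]
Now the uniform outer ball condition of radius $r$ means, in local boundary coordinates where $\partial\Omega$ is the graph of a function $\phi$, that $\phi$ is semiconvex with $D^2\phi\ge -r^{-1}I$; equivalently the mean curvature satisfies the one-sided bound $\cH\ge -(N-1)/r$ on $\partial\Omega$. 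Inserting this and using $|\nabla u|=|\p_\nu u|$ on $\partial\Omega$ gives
\[
\int_\Omega |D^2 u|^2\,dx\le \int_\Omega |\Delta u|^2\,dx+\frac{N-1}{r}\int_{\partial\Omega}|\nabla u|^2\,d\sigma .
\]
Applying this componentwise to $\nabla u$, the boundary term is absorbed via the trace inequality $\|v\|_{L^2(\partial\Omega)}^2\le \varepsilon\|\nabla v\|_{L^2(\Omega)}^2+C_\varepsilon\|v\|_{L^2(\Omega)}^2$ valid on Lipschitz domains, together with $\|\nabla u\|_{L^2(\Omega)}^2=-\int_\Omega u\,\Delta u\,dx\le\|u\|_{L^2}\|\Delta u\|_{L^2}$ and Poincaré's inequality; choosing $\varepsilon$ small enough to absorb $\|D^2u\|_{L^2}^2$ on the left we obtain $\|u\|_{H^2(\Omega)}\le C(N,r,\Omega)\,\|\Delta u\|_{L^2(\Omega)}$. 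For convex $\Omega$ one has $\cH\ge 0$ and the boundary term disappears, leaving the sharp inequality $\|D^2u\|_{L^2}\le\|\Delta u\|_{L^2}$.

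To descend to a merely Lipschitz domain satisfying the uniform outer ball condition — where $\cH$, and the identity above, are not literally available — I would argue by approximation. One exhausts $\Omega$ by smooth domains $\Omega_j\subset\subset\Omega$ with $\Omega_j\uparrow\Omega$, chosen so that the $\Omega_j$ enjoy a uniform outer ball condition (say of radius $r/2$) and a uniformly bounded Lipschitz character; then the trace and Poincaré constants, hence the constant in the a priori estimate, may be taken independent of $j$. Given $f\in L^2(\Omega)$ and the unique $u\in H^1_0(\Omega)$ with $\Delta u=f$, let $u_j\in H^2(\Omega_j)\cap H^1_0(\Omega_j)$ solve $\Delta u_j=f$ in $\Omega_j$, which is legitimate by classical regularity on the smooth domains $\Omega_j$; the uniform estimate gives $\sup_j\|u_j\|_{H^2(\Omega_j)}\le C\|f\|_{L^2(\Omega)}$. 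Extending the $u_j$ by zero and using interior estimates, $\{u_j\}$ is bounded in $H^1_0(\Omega)$ and in $H^2_{\mathrm{loc}}(\Omega)$, so along a subsequence it converges weakly in $H^1_0(\Omega)$ and weakly in $H^2(\Omega')$ for every $\Omega'\subset\subset\Omega$; the weak limit solves $\Delta\,\cdot\,=f$ in $H^1_0(\Omega)$, hence equals $u$, and lower semicontinuity of $v\mapsto\|D^2 v\|_{L^2(\Omega')}$ under weak $H^2(\Omega')$-convergence, applied on an increasing sequence of such $\Omega'$, yields $\|D^2u\|_{L^2(\Omega)}\le C\|f\|_{L^2(\Omega)}$. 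Thus $u\in H^2(\Omega)$ with the required bound.

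The main obstacle is the geometric step in the last paragraph: for a general Lipschitz domain with the uniform outer ball condition, producing a smooth exhausting family that retains a uniform outer ball radius (and a uniform Lipschitz character) is delicate. For convex $\Omega$ this is elementary, since a convex body is approximated from inside by smooth convex bodies, which automatically inherit outer balls of every radius; in the general case this construction, together with the attendant uniform boundary estimates, is precisely the technical content of Adolfsson's work~\cite{adolfsson}. An alternative that avoids the smooth identity altogether is to run Nirenberg's difference-quotient method directly in boundary charts, where the semiconvexity $D^2\phi\ge -r^{-1}I$ of the local defining function supplies exactly the one-sided control on the relevant second difference quotients needed to bound $\|D^2u\|_{L^2}$ near $\partial\Omega$.
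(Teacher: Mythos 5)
The paper does not prove this statement at all: it is quoted as a known regularity theorem and attributed to Adolfsson \cite{adolfsson}, so there is no internal proof to compare against. Your sketch is a correct outline of the standard argument behind that citation (essentially Kadlec's method extended to the outer-ball setting): the Reilly-type identity $\int_\Omega |D^2u|^2\,dx=\int_\Omega(\Delta u)^2\,dx-\int_{\p\Omega}\cH(\p_\nu u)^2\,d\sigma$ on smooth domains, the one-sided bound $\cH\ge -(N-1)/r$ coming from the exterior ball of radius $r$, absorption of the boundary term by the interpolation trace inequality, and a limiting argument over an interior smooth exhaustion with uniform constants. Each of these steps is sound, and you correctly identify the only genuinely delicate point, namely the construction of smooth approximating domains that retain a uniform outer-ball radius and a uniform Lipschitz character (so that the trace and Poincar\'e constants are uniform in $j$); this is exactly the technical content of \cite{adolfsson}, which you defer to, just as the paper does. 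For the convex case, which is what the paper actually needs in Corollary~\ref{cor:Grisbis} and Theorem~\ref{ncontinuity}, your observation that the boundary term has a favorable sign ($\cH\ge 0$) and that smooth convex interior approximations are elementary gives a complete and self-contained proof with the sharp bound $\|D^2u\|_{L^2(\Omega)}\le\|\Delta u\|_{L^2(\Omega)}$.
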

%In fact, we conclude that  if $\Omega$  is a bounded domain with Lipschitz boundary and satisfies the uniform outer ball condition then 
%of problem \ref{hingednaveige} can be written in the energy space $H^2(\Omega) \cap H^1_0(\Omega)$   \\

On the other hand, the spectral problem corresponding to \eqref{hinged} is
\begin{equation}\label{hingedeige}
\begin{cases}
\Delta^2 u = \eta u, \quad &\textup{in $\Omega$,} \\
u = 0, \quad & \textup{on $\p\Omega$,} \\
\p^2_{\nu\nu} u = 0, \quad & \textup{on $\p\Omega$.}
\end{cases}
\end{equation}
and the weak formulation reads
\begin{equation} \label{hingednaveigeweak}
\int_\Omega D^2 u : D^2 \varphi dx= \eta \int_{\Omega}  u \, \varphi  dx
\end{equation}
for all $\varphi \in H^2(\Omega) \cap H^1_0(\Omega)$, where $u \in H^2(\Omega) \cap H^1_0(\Omega)$ is the unknown eigenfunction and $\eta$ is the corresponding eigenvalue. If $\Omega$ is a bounded domain with  Lipschitz boundary, also this problem has a sequence of positive eigenvalues of finite multiplicity, represented by
$$
0<\eta_1(\Omega)\le \dots  \le \eta_n(\Omega)\le \dots .
$$

As for the Steklov problem, by Theorems~\ref{thm:Gris}, \ref{kadlec}  we immediately deduce the following

\begin{corollary} \label{cor:Grisbis} If $P$ is a convex polygon in $\R^2$ then the two eigenvalue problems \eqref{squareweakeige} are \eqref{hingednaveigeweak} are the same. In particular,
$
\eta_n(P) =\xi_n(P)= (\lambda_n^{\rm dir}(\Omega))^2
$
for all $n\in \N$. 
\end{corollary}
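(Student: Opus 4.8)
The plan is to combine Grisvard's identity (Theorem~\ref{thm:Gris}) with Adolfsson's regularity result (Theorem~\ref{kadlec}), exactly in the spirit of the proof of Corollary~\ref{cor:Gris}, the only extra ingredient being the identification of the two energy spaces. First I would observe that a convex polygon $P\subset\R^2$ is a bounded Lipschitz domain satisfying the uniform outer ball condition, since convexity guarantees the outer ball condition with an arbitrary radius. Hence both Theorem~\ref{thm:Gris} and Theorem~\ref{kadlec} apply to $P$. In particular, Theorem~\ref{kadlec} gives $H(\Delta,P)\cap H^1_0(P)=H^2(P)\cap H^1_0(P)$ with equivalent norms, so the space over which the weak formulation \eqref{squareweakeige} is posed coincides with the space over which \eqref{hingednaveigeweak} is posed.

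Next I would apply the polarization identity to Theorem~\ref{thm:Gris}: since $u\mapsto\int_P D^2u:D^2u\,dx$ and $u\mapsto\int_P|\Delta u|^2\,dx$ are quadratic forms coming from symmetric bilinear forms and they agree on $H^2(P)\cap H^1_0(P)$, the bilinear forms themselves agree, i.e. $\int_P D^2u:D^2\varphi\,dx=\int_P\Delta u\,\Delta\varphi\,dx$ for all $u,\varphi\in H^2(P)\cap H^1_0(P)$. Because the right-hand sides of \eqref{squareweakeige} and \eqref{hingednaveigeweak} are the identical bilinear form $\int_P u\varphi\,dx$ (up to the spectral parameter), this shows that $(\eta,u)$ solves \eqref{hingednaveigeweak} if and only if $(\eta,u)$ solves \eqref{squareweakeige}. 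Equivalently, the two closed quadratic forms on the common form domain $H^2(P)\cap H^1_0(P)$ generate the same self-adjoint operator in $L^2(P)$, hence have the same spectrum with multiplicities, so $\eta_n(P)=\xi_n(P)$ for all $n\in\N$. Combining this with the identity $\xi_n(P)=(\lambda_n^{\rm dir}(P))^2$ recorded earlier in this section completes the proof. Finally, since $P$ is bounded with Lipschitz boundary, both sequences are genuine divergent sequences of finite-multiplicity eigenvalues, so the statement is non-vacuous.

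I do not expect a genuine obstacle: essentially all the hard analysis is imported from Theorems~\ref{thm:Gris} and~\ref{kadlec}. The one point requiring a little care is the matching of the two energy spaces — without Theorem~\ref{kadlec} one would only know that \eqref{hingednaveigeweak} is tested against a possibly strictly smaller space than \eqref{squareweakeige}, and the equality of the operators (not merely of the Rayleigh quotients restricted to $H^2\cap H^1_0$) would be unclear on a general Lipschitz domain. On convex polygons this subtlety disappears and the corollary follows in a couple of lines.
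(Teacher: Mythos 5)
Your proposal is correct and follows exactly the route the paper intends: the corollary is deduced ``immediately'' from Theorem~\ref{thm:Gris} via polarization (as in the proof of Corollary~\ref{cor:Gris}) together with Theorem~\ref{kadlec} to identify the energy space $H(\Delta,P)\cap H^1_0(P)$ with $H^2(P)\cap H^1_0(P)$ on the convex polygon. Your explicit remark on why the identification of the two form domains is the one non-trivial point is a faithful (and slightly more detailed) account of what the paper leaves implicit.
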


The stability of the eigenvalues of the Dirichlet Laplacian is well-known. Among various results, the following is in the same spirit of Theorem~\ref{ncontinuity}, see \cite[Thm.~2.3.17]{henrot}

\begin{theorem}\label{henrot} The maps $\Omega\mapsto \lambda_n^{\rm dir}(\Omega)$ defined  in the class of bounded convex domains $\Omega$ in $\R^N$  are continuous with respect to the Hausdorff distance. 
\end{theorem}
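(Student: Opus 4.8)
The plan is to deduce the result from the two classical structural properties of the Dirichlet Laplacian — monotonicity and scaling of the eigenvalues — together with the purely geometric fact that, for convex bodies, Hausdorff convergence to a non-degenerate limit forces a two-sided sandwiching by dilations of that limit. Recall the two facts we shall use freely on arbitrary bounded domains. \textbf{(a) Monotonicity:} if $U\subseteq V$ then $\lambda_n^{\rm dir}(U)\ge \lambda_n^{\rm dir}(V)$, since extension by zero gives $H^1_0(U)\subseteq H^1_0(V)$, so in the Courant--Fischer formula $\lambda_n^{\rm dir}(U)=\min_{\dim E=n}\max_{0\ne u\in E}\bigl(\int_U|\nabla u|^2dx\bigr)\big/\bigl(\int_U u^2dx\bigr)$, with $E$ ranging over subspaces of $H^1_0(U)$, one minimises over a smaller family. \textbf{(b) Scaling:} $\lambda_n^{\rm dir}(tU)=t^{-2}\lambda_n^{\rm dir}(U)$ for $t>0$, by the change of variables $x\mapsto tx$. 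Since $\lambda_n^{\rm dir}$ is translation invariant, we may place the fixed convex domain $\Omega$ so that $0\in\Omega$, hence $B_\rho\subseteq\Omega$ for some $\rho>0$.

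Let $\Omega_m$ be bounded convex domains with $d_H(\Omega_m,\Omega)=:\delta_m\to 0$ (the Hausdorff distance of open sets equals that of their closures). Work with the support functions $h_K(u)=\sup_{x\in K}x\cdot u$ on $S^{N-1}$; then $\|h_{\overline\Omega_m}-h_{\overline\Omega}\|_{L^\infty(S^{N-1})}=\delta_m$ and $h_{\overline\Omega}(u)\ge\rho$ for all $u$ since $B_\rho\subseteq\overline\Omega$. Fix $\epsilon\in(0,1)$. For the outer inclusion, using $B_{\delta_m}\subseteq(\delta_m/\rho)\overline\Omega$ and the identity $\overline\Omega+t\overline\Omega=(1+t)\overline\Omega$ (valid for convex sets), we get
\[
\overline\Omega_m\subseteq\overline\Omega+B_{\delta_m}\subseteq(1+\delta_m/\rho)\,\overline\Omega .
\]
For the inner inclusion, from $h_{\overline\Omega_m}\ge h_{\overline\Omega}-\delta_m\ge(1-\delta_m/\rho)h_{\overline\Omega}=h_{(1-\delta_m/\rho)\overline\Omega}$ and the fact that containment of convex bodies is equivalent to the corresponding inequality of support functions, we get $(1-\delta_m/\rho)\overline\Omega\subseteq\overline\Omega_m$. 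Passing to interiors, for every $m$ large enough that $\delta_m/\rho<\epsilon$,
\[
(1-\epsilon)\Omega\subseteq\Omega_m\subseteq(1+\epsilon)\Omega .
\]

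Combining this sandwiching with (a) and (b) gives, for all such $m$,
\[
(1+\epsilon)^{-2}\lambda_n^{\rm dir}(\Omega)=\lambda_n^{\rm dir}\bigl((1+\epsilon)\Omega\bigr)\le\lambda_n^{\rm dir}(\Omega_m)\le\lambda_n^{\rm dir}\bigl((1-\epsilon)\Omega\bigr)=(1-\epsilon)^{-2}\lambda_n^{\rm dir}(\Omega).
\]
Taking $\liminf$ and $\limsup$ as $m\to\infty$ and then letting $\epsilon\to0^+$ yields $\lambda_n^{\rm dir}(\Omega_m)\to\lambda_n^{\rm dir}(\Omega)$, i.e. the asserted continuity with respect to the Hausdorff distance. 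The only genuinely delicate point is the geometric lemma of the second paragraph: convexity is used essentially both in the outer inclusion (writing $B_{\delta_m}$ as a dilation of $\overline\Omega$ and adding it) and in the inner inclusion (the support-function criterion for containment needs the larger body $\overline\Omega_m$ to be convex); indeed the continuity statement is false without convexity, as domains with small holes or thin channels show. Everything else is the standard monotonicity/scaling machinery, so no further regularity is needed.
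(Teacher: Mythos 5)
Your argument is correct, and it is genuinely different from what the paper does: the paper offers no proof at all but cites Henrot's book, where the statement is obtained through the machinery of $\gamma$-convergence (Hausdorff convergence of convex sets implies Mosco convergence of the spaces $H^1_0(\Omega_m)$ to $H^1_0(\Omega)$, hence norm-resolvent convergence of the Dirichlet Laplacians and convergence of the whole spectrum). You instead exploit the special rigidity of convex bodies: once $B_\rho\subseteq\Omega$, the support-function estimates and the identity $K+tK=(1+t)K$ give the two-sided sandwich $(1-\epsilon)\Omega\subseteq\Omega_m\subseteq(1+\epsilon)\Omega$ for large $m$, and then only domain monotonicity and the scaling law $\lambda_n^{\rm dir}(tU)=t^{-2}\lambda_n^{\rm dir}(U)$ are needed. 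All the individual steps check out (the passage to interiors is legitimate because an open convex set is the interior of its closure, and the containment criterion via support functions applies since $\overline\Omega_m$ is convex). What your route buys is a short, self-contained, completely elementary proof; what the $\gamma$-convergence route buys is generality — it yields in addition convergence of eigenfunctions and resolvents, treats other boundary conditions and non-convex classes (e.g.\ uniform capacity or cone conditions) uniformly, and is the same mechanism the paper leans on elsewhere. It is worth noting explicitly that your dilation trick is specific to the Dirichlet problem: for the Steklov-type eigenvalues treated in Theorem~\ref{ncontinuity} there is no domain monotonicity, which is precisely why the paper has to work much harder there.
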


By Corollary~\ref{cor:Grisbis} and Theorem~\ref{henrot} we immediately deduce the following

\begin{corollary}[Classical Babu\v{s}ka Paradox - spectral version] Let $\Omega$ be a bounded convex domain in $\R^2$ such that $\eta_n(\Omega)\ne \xi_n(\Omega)$  for some $n\in \N$. Let $\{P_k\}_{k}$ be a sequence of convex polygons converging to $\Omega$ with respect to the Hausdorff distance. Then 
$$
\lim_{k\to \infty }\eta_n(P_k)=\lim_{k\to\infty}\xi_n(P_k)=\xi_n(\Omega)\ne \eta_n(\Omega).
$$
\end{corollary}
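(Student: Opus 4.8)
The plan is to obtain the statement as a direct combination of Corollary~\ref{cor:Grisbis} and Theorem~\ref{henrot}, in exact analogy with the proof of the Babu\v{s}ka Paradox for (MDBS). First I would fix $n\in\N$ and, for each $k$, apply Corollary~\ref{cor:Grisbis} to the convex polygon $P_k$: since Grisvard's identity (Theorem~\ref{thm:Gris}) together with the regularity result of Adolfsson (Theorem~\ref{kadlec}) makes the weak formulations \eqref{squareweakeige} and \eqref{hingednaveigeweak} coincide on $P_k$, we have
\[
\eta_n(P_k)=\xi_n(P_k)=\bigl(\lambda_n^{\rm dir}(P_k)\bigr)^2 .
\]

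Next I would pass to the limit. Since the $P_k$ are convex and converge to the convex domain $\Omega$ in the Hausdorff distance, Theorem~\ref{henrot} gives $\lambda_n^{\rm dir}(P_k)\to\lambda_n^{\rm dir}(\Omega)$ as $k\to\infty$; squaring (a continuous operation) yields $\xi_n(P_k)=\bigl(\lambda_n^{\rm dir}(P_k)\bigr)^2\to\bigl(\lambda_n^{\rm dir}(\Omega)\bigr)^2=\xi_n(\Omega)$. Combining with the previous display,
\[
\lim_{k\to\infty}\eta_n(P_k)=\lim_{k\to\infty}\xi_n(P_k)=\xi_n(\Omega),
\]
and the standing hypothesis $\eta_n(\Omega)\ne\xi_n(\Omega)$ shows that this common limit differs from $\eta_n(\Omega)$, which is precisely the paradox: the hinged-plate eigenvalue $\eta_n$ is \emph{not} the limit of its polygonal approximants, whereas the ``naive'' eigenvalue $\xi_n$ is.

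There is essentially no obstacle left at this point: all the analytic difficulty has been absorbed into the earlier results, namely Grisvard's identity on polygons, the identification $H(\Delta,\Omega)\cap H^1_0(\Omega)=H^2(\Omega)\cap H^1_0(\Omega)$ on domains satisfying the uniform outer ball condition, and the Hausdorff-continuity of the Dirichlet eigenvalues on convex domains. The only point worth a remark is that convexity is used twice in an essential way: once to guarantee that $\lambda_n^{\rm dir}$ is continuous for the Hausdorff distance (Theorem~\ref{henrot}), and once, implicitly through Theorem~\ref{kadlec}, to ensure that $\eta_n(P_k)$ and $\xi_n(P_k)$ genuinely coincide --- on a general non-convex polygon a function with $\Delta u\in L^2$ need not lie in $H^2$, so the two fourth-order problems would no longer agree.
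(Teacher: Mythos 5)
Your proposal is correct and follows exactly the paper's route: the corollary is deduced immediately by combining Corollary~\ref{cor:Grisbis} (equality $\eta_n(P_k)=\xi_n(P_k)=(\lambda_n^{\rm dir}(P_k))^2$ on convex polygons) with Theorem~\ref{henrot} (Hausdorff-continuity of $\lambda_n^{\rm dir}$ on convex domains), and then invoking the hypothesis $\eta_n(\Omega)\ne\xi_n(\Omega)$. Your closing remark on where convexity enters is also consistent with the paper's discussion.
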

If $\p \Omega$ is sufficiently regular then  $\eta_n(\Omega)\ne \xi_n(\Omega)$  for all $n\in \N$. In fact, we have the following theorem which holds in any space dimension. Recall that $\cH$ denotes the mean curvature of the boundary (the sum of the principal curvatures).

\begin{theorem}
Let $\Omega$ be a bounded domain of class $C^2$  in $\R^N$ with $\inf_{x \in \p \Omega} \cH(x) > 0$. Then $\eta_n(\Omega) < \xi_n(\Omega)$ for all $n \in \N$. 
\end{theorem}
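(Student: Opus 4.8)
The plan is to compare the two Rayleigh quotients by means of the classical curvature identity and then to promote the resulting non-strict inequality to a strict one through a rigidity argument. Since $\Omega$ is of class $C^2$ it satisfies the uniform outer ball condition, so by Theorem~\ref{kadlec} the energy space of both \eqref{squareweakeige} and \eqref{hingednaveigeweak} is $V:=H^2(\Omega)\cap H^1_0(\Omega)$, and both eigenvalue sequences admit the Min-Max representation over $n$-dimensional subspaces of $V$, with common denominator $\int_\Omega u^2\,dx$ and numerators $\int_\Omega|D^2u|^2\,dx$, respectively $\int_\Omega|\Delta u|^2\,dx$. The first step is to establish, for all $u\in V$, the identity
\[
\int_\Omega |\Delta u|^2\,dx-\int_\Omega |D^2u|^2\,dx=\int_{\partial\Omega}\cH\,(\p_\nu u)^2\,d\sigma .
\]
I would prove it first for $u\in C^\infty(\overline\Omega)$ vanishing on $\partial\Omega$, using the pointwise identity $|D^2u|^2-(\Delta u)^2=\sum_{i,j}\partial_i(\partial_j u\,\partial_{ij}u)-\sum_j\partial_j(\partial_j u\,\Delta u)$, the divergence theorem, and the boundary relations $\nabla u=(\p_\nu u)\nu$ and $\Delta u=\p^2_{\nu\nu}u+\cH\,\p_\nu u$, which are valid on $\partial\Omega$ when $u=0$ there; then I would pass to $u\in V$ by density, both sides being continuous in the $H^2(\Omega)$-norm since $u\mapsto \p_\nu u$ is bounded from $H^2(\Omega)$ into $L^2(\partial\Omega)$ and $\cH\in C^0(\partial\Omega)$. (This identity is essentially classical, cf.\ \cite{grisvard92}, and is consistent with the shift $\cH=N-1$ appearing for the ball in Theorem~\ref{nshift}.)

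Setting $c_0:=\inf_{\partial\Omega}\cH>0$, the identity gives $\int_\Omega|D^2u|^2\,dx\le\int_\Omega|\Delta u|^2\,dx-c_0\int_{\partial\Omega}(\p_\nu u)^2\,d\sigma\le\int_\Omega|\Delta u|^2\,dx$ for every $u\in V$, whence $\eta_n(\Omega)\le\xi_n(\Omega)$ for all $n$ by Min-Max. To obtain the strict inequality I would argue by contradiction: assume $\eta_n(\Omega)=\xi_n(\Omega)$ for some $n$, let $v_1,\dots,v_n$ be the first $n$ Dirichlet Laplacian eigenfunctions, normalized in $L^2(\Omega)$ (so that, as recalled after \eqref{squareweakeige}, they are exactly the first $n$ eigenfunctions of \eqref{squareweakeige} and $\int_\Omega\Delta v_i\Delta v_j\,dx=\xi_i\delta_{ij}$), and set $F:=\operatorname{span}\{v_1,\dots,v_n\}$. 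Using $F$ as a test subspace in the Min-Max for $\eta_n$ together with the identity yields
\[
\xi_n=\eta_n\le\max_{0\ne u\in F}\frac{\int_\Omega|\Delta u|^2\,dx-\int_{\partial\Omega}\cH(\p_\nu u)^2\,d\sigma}{\int_\Omega u^2\,dx}\le\max_{0\ne u\in F}\frac{\int_\Omega|\Delta u|^2\,dx}{\int_\Omega u^2\,dx}=\xi_n ,
\]
so the maximum is attained at some $u^*\in F\setminus\{0\}$ for which both inequalities are equalities, i.e.\ $\int_{\partial\Omega}(\p_\nu u^*)^2\,d\sigma=0$ and $\int_\Omega|\Delta u^*|^2\,dx=\xi_n\int_\Omega(u^*)^2\,dx$.

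Writing $u^*=\sum_{k\le n}a_k v_k$, the second equality becomes $\sum_{k\le n}a_k^2(\xi_n-\xi_k)=0$, which forces $a_k=0$ whenever $\xi_k<\xi_n$; hence $u^*$ is a linear combination of Dirichlet eigenfunctions all associated with the single eigenvalue $\lambda_n^{\rm dir}=\sqrt{\xi_n}$, so $-\Delta u^*=\lambda_n^{\rm dir}u^*$ in $\Omega$ and $u^*=0$ on $\partial\Omega$, while also $\p_\nu u^*=0$ on $\partial\Omega$. Thus $u^*\in H^2_0(\Omega)$, and its extension by zero $\widetilde{u^*}\in H^2(\R^N)$ solves $-\Delta\widetilde{u^*}=\lambda_n^{\rm dir}\widetilde{u^*}$ in $\R^N$ (there is no distributional defect across $\partial\Omega$ since $\widetilde{u^*}\in H^2$) while vanishing on the nonempty open set $\R^N\setminus\overline\Omega$; by unique continuation for the Helmholtz equation (or simply by interior real-analyticity of solutions together with the connectedness of $\R^N$) we conclude $u^*\equiv 0$, a contradiction. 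Therefore $\eta_n(\Omega)<\xi_n(\Omega)$ for all $n\in\N$. The routine ingredients are the density argument and the integration by parts behind the curvature identity; the genuinely delicate point is the last rigidity step, namely excluding that the extremal function be simultaneously a Dirichlet eigenfunction and an element of $H^2_0(\Omega)$ — this is exactly where unique continuation enters, and for $n=1$ it can be replaced by Hopf's lemma applied to the (sign-definite) first eigenfunction.
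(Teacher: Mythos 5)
Your proposal is correct and follows essentially the same route as the paper: Reilly's identity $\int_\Omega|D^2u|^2\,dx=\int_\Omega|\Delta u|^2\,dx-\int_{\partial\Omega}\cH\,(\p_\nu u)^2\,d\sigma$ on $H^2(\Omega)\cap H^1_0(\Omega)$ gives $\eta_n\le\xi_n$ via Min-Max, and the strict inequality is obtained by testing $\eta_n$ on the span of the first $n$ Navier (i.e.\ Dirichlet Laplacian) eigenfunctions and deriving a contradiction. Your write-up merely streamlines the paper's two-case analysis into a single chain of equalities forcing $\p_\nu u^*=0$, and supplies explicitly (via extension by zero and unique continuation) the rigidity step that a Dirichlet eigenfunction cannot lie in $H^2_0(\Omega)$, which the paper asserts without detail.
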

\begin{proof}
Define the Rayleigh quotients
\[
R_1(\varphi) = \frac{\int_\Omega |D^2 \varphi|^2  dx}{\int_\Omega \varphi^2 dx}, \qquad R_2(\varphi) = \frac{\int_\Omega (\Delta \varphi)^2 dx}{\int_\Omega \varphi^2 dx}
\]
for all $\varphi \in H^2(\Omega) \cap H^1_0(\Omega)$. By the Min-Max Principle
\begin{equation}\label{minmaxetaxi}
\eta_n(\Omega)=\min_{\substack{E\subset H^2(\Omega) \cap H^1_0(\Omega)\\ {\rm dim}E=n}}\max_{\varphi\in E}R_1(\varphi ),\ \ {\rm and}\ \ 
\xi_n(\Omega)=\min_{\substack{E\subset H^2(\Omega) \cap H^1_0(\Omega)\\ {\rm dim}E=n}}\max_{\varphi\in E} R_2(\varphi).
\end{equation}
Note that here we implicitly used Theorem~\eqref{kadlec} for the variational characterization of $\xi_n$. By the celebrated Reilly's  formula, we have 
\[
\int_\Omega |D^2 \varphi|^2 dx = \int_\Omega (\Delta \varphi)^2 dx- \int_{\p \Omega} \cH \bigg(\frac{\p \varphi}{\p \nu} \bigg)^2\, d\sigma
\]
for all $\varphi \in H^2(\Omega) \cap H^1_0(\Omega)$; thus,
\[
R_1(\varphi) \leq R_2(\varphi)
\]
for all $\varphi \in H^2(\Omega) \cap H^1_0(\Omega)$, which implies $\eta_n \leq \xi_n$ for all $n \in \N$, $n \geq 1$. \\
Assume for contradiction that $\eta_n = \xi_n$ for some $n$. Let $w_1, \dots, w_n$ be an orthonormal set of $n$  eigenfunctions $w_i$ associated with the eigenvalues $\xi_i$ of \eqref{hingednaveige}. Let $W = {\rm span}\{w_1, \dots, w_n\}$. By \eqref{minmaxetaxi} we conclude that 
\[
\xi_n = R_2(w_n) = \max_{\varphi \in W} R_2(\varphi).
\]
Using $W$ as a test linear space in the variational characterization  \eqref{minmaxetaxi}  of $\eta_n$ we obtain
\[
\xi_n=\eta_n \leq \max_{\varphi \in W} R_1(\varphi) = R_1(\hat{w}),
\]
where $\hat{w}$ is a maximizer for $R_1$ in $W$. There are now two cases:\\
Case 1. Assume that $\p_\nu \hat{w} = 0$ on $\p \Omega$. In this case 
\[
R_1(\hat{w}) = \frac{\int_\Omega |D^2 \hat{w}|^2 dx}{\int_\Omega \hat{w}^2 dx} = \frac{\int_\Omega (\Delta \hat{w})^2 dx}{\int_\Omega \hat{w}^2 dx} \geq	 \xi_n
\]
by the Reilly's formula, and the definition of $\hat{w}$; but then $\hat{w}$ is also a maximizer for $R_2$ on $W$, hence it is  an eigenfunction associated with $\xi_n$. Thus, $\hat w$ is an eigenfunction of the Dirichlet Laplacian in $\Omega$ with  $\hat w=\partial_{\nu}\hat w=0$ on $\partial\Omega$.  We conclude that $\hat w= 0$, a contradiction. Thus case 1 never occurs.\\
Case 2. Assume that  $\int_{\p \Omega} (\p_\nu \hat{w})^2\, d\sigma > 0$. Then 
\[
\begin{split}
\eta_n &\leq \frac{\int_\Omega |D^2 \hat{w}|^2 dx}{\int_\Omega \hat{w}^2dx} = \frac{\int_\Omega (\Delta \hat{w})^2 dx- \int_{\p \Omega} \cH (\p_\nu \hat{w})^2d\sigma}{\int_\Omega \hat{w}^2dx}\\
&\leq \frac{\int_\Omega (\Delta \hat{w})^2dx - \inf_{x \in \p \Omega}\cH(x) \int_{\p \Omega} (\p_\nu \hat{w})^2d\sigma}{\int_\Omega \hat{w}^2dx} \\
&< \frac{\int_\Omega (\Delta \hat{w})^2dx}{\int_\Omega \hat{w}^2dx} \leq \xi_n
\end{split}
\]
and therefore $\eta_n < \xi_n$ for all $n \geq 1$, as claimed.
\end{proof}

\section{Oscillating boundaries: trichotomy results} \label{sec:oscillating}
In this section we consider the couples of weak problems \eqref{eq:DBSweak}, \eqref{eq:MDBSweak} and \eqref{squareweakeige}, \eqref{hingednaveigeweak} on a family of bounded $C^{1,1}$ domains $\{\Omega_\eps\}_{\eps>0}$ converging in a suitable sense as $\eps \to 0^+$ to a bounded domain $\Omega$ of class $C^{1,1}$. In order to unify the presentation and to clarify the results, we will focus on domains in the following specific form.\\
We assume that $\Omega$ is of the type $\Omega=W\times (-1,0)$ where $W$ is a cuboid or a bounded domain in $\R^{N-1}$ of class $C^{1,1}$. We assume that the perturbed domain
$\Omega_\eps$ is given by
\begin{equation} \label{eq:Omega-eps-oscillating}
\Omega_\eps=\{(x',x_N):x'\in W\, ,
-1<x_N<g_\eps(x')\}
\end{equation}
where $g_\eps(x')=\eps^\alpha b(x'/\eps)$ for any $x'\in W$ and $b:\R^{N-1}\to [0,+\infty)$ is a $Y$-periodic function where $Y=\left(-\frac 12,\frac 12\right)^{N-1}$ is the unit cell in $\R^{N-1}$. We denote by $\Gamma_\eps$ and $\Gamma$ the sets
\begin{equation} \label{eq:Gamma-eps}
\Gamma_\eps:=\{(x',g_\eps(x')):x'\in W\} \quad \text{and} \quad
\Gamma:=W\times \{0\} \, .
\end{equation}
Let us also write $\Sigma_\eps = \p \Omega_\eps \setminus \Gamma_\eps$, for $\eps > 0$.

We recall now the following result regarding the behaviour of the eigenvalues of the (MDBS) Problem, see \cite[Thm.\,4.6]{FerreroLamb}. Note that the boundary condition $\partial_{\nu} u=0$ on $\Sigma_{\epsilon}$ was used for simplicity. 

\begin{theorem}[The trichotomy theorem - (MDBS) version] \label{thm:trichotomyMDBS} Let $\Omega_{\eps}$, $\eps \geq 0$ be as above, with $\Omega_0=\Omega$. Let $\mu_n(\eps )$, $n\in \N$,  be the eigenvalues of problems  
\begin{equation} \label{eq:Steklov-modificatoDIR}
\begin{cases}
\Delta^2 u=0, & \qquad \text{in } \Omega_{\eps} \, , \\
u=0, & \qquad \text{on } \partial\Omega_{\eps } \, , \\
\partial_{\nu}u=0, & \qquad \text{on } \Sigma_{\eps} \, , \\
\partial^2_{\nu\nu}u= \mu \, \partial_{\nu}u, & \qquad \text{on } \Gamma_{\eps
} \, ,
\end{cases}
\end{equation}
for  $\eps \geq 0$. Then the following statements hold:
\begin{itemize}
\item[(i)] If $\alpha >3/2$ then  $\mu_n(\eps )\to \mu_n(0
)$, as $\eps \to 0$. 
\item[(ii)] If $\alpha =3/2$ then
$\mu_n(\eps )\to  \mu_n(0 )+\gamma $ as $\eps \to 0$,
where 
\[\gamma=\int_{Y\times (-\infty , 0)  }|D^2V|^2dy,\]
and the function $V$ is $Y$-periodic in the variables $y'$ and  satisfies the following microscopic problem
\begin{equation}
\left\{
 \begin{array}{ll}
 \Delta^2V=0,\  & \hbox{\rm in } Y\times (-\infty, 0), \\
 V(y', 0)=b(y'),\ & \hbox{\rm on } Y, \\
 \frac{\partial^2 V}{\partial y_N^2}(y',0)=0,\  &\hbox{\rm on } Y.
  \end{array}
\right.
\end{equation} 
\item[(iii)] If
$1\le \alpha < 3/2$ and $b$ is non-constant then $\mu_n(\eps)\to \infty $, as $\eps
\to 0$.
\end{itemize}
\end{theorem}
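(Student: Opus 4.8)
The plan is to prove the trichotomy via a careful analysis of the Rayleigh quotient
\[
R_\eps(u)=\frac{\int_{\Omega_\eps}|D^2u|^2\,dx}{\int_{\Gamma_\eps}(\partial_\nu u)^2\,d\sigma},
\]
using the Min-Max Principle for $\mu_n(\eps)$ on the space $\{u\in H^2(\Omega_\eps)\cap H^1_0(\Omega_\eps):\partial_\nu u=0\ \text{on}\ \Sigma_\eps\}$. The key difficulty, common to all three cases, is that the domains $\Omega_\eps$ vary with $\eps$, so one cannot directly compare test spaces; the standard remedy is to construct an extension/restriction operator transplanting functions between $\Omega_\eps$ and $\Omega$, controlling the $H^2$-norm and the boundary term on $\Gamma_\eps$ versus $\Gamma$, together with a matching corrector near the oscillating boundary $\Gamma_\eps$. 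I expect the construction and norm-control of this transplantation to be the main obstacle, as it must simultaneously preserve the Dirichlet condition $u=0$ and the Neumann condition $\partial_\nu u=0$ on $\Sigma_\eps$ while introducing only an $o(1)$ (case (i)), $O(1)$ with explicit limit (case (ii)), or unboundedly large (case (iii)) perturbation in the numerator.

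For the upper bounds ($\limsup \mu_n(\eps)\le\dots$) I would take eigenfunctions $u_1,\dots,u_n$ of the limiting ($\eps=0$) problem, transplant them to $\Omega_\eps$, and estimate $R_\eps$ on their span. The difference $\int_{\Omega_\eps}|D^2\tilde u|^2-\int_\Omega|D^2u|^2$ splits into an integral over the thin oscillating layer between $\Gamma$ and $\Gamma_\eps$ (of width $O(\eps^\alpha)$) plus corrector contributions; a change of variables to the unit cell $Y$ and a blow-up of the layer by the factor $\eps$ in the $y'$ variables and $\eps^\alpha$ vertically produces, after the scaling bookkeeping, a factor that is a positive power of $\eps$ when $\alpha>3/2$, exactly order one when $\alpha=3/2$ (yielding the cell energy $\gamma=\int_{Y\times(-\infty,0)}|D^2V|^2dy$ from the microscopic biharmonic problem), and a negative power when $\alpha<3/2$. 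The convergence of the rescaled layer energy to $\gamma$ requires a trace/Poincaré-type argument on the half-cylinder $Y\times(-\infty,0)$, exploiting exponential decay of $V$ as $y_N\to-\infty$ guaranteed by $Y$-periodicity and ellipticity of $\Delta^2$.

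For the lower bounds ($\liminf$ in (i), (ii)) I would use the reverse transplantation: take (quasi-)eigenfunctions $u_{i,\eps}$ on $\Omega_\eps$, show their restrictions (or suitable modifications removing the corrector) are bounded in $H^2(\Omega)$ and hence, up to subsequences, converge weakly to admissible limit functions; lower semicontinuity of the Dirichlet-type form $\int|D^2\cdot|^2$ under weak convergence, together with the continuity of the boundary terms on $\Gamma_\eps\to\Gamma$ (again via the layer change of variables and the fact that $\partial_\nu u_{i,\eps}$ restricted to $\Gamma_\eps$ transfers, up to the explicit $\gamma$-shift, to $\partial_\nu u_i$ on $\Gamma$), yields the matching inequality. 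One must check along the way that the limit functions are linearly independent and satisfy $\partial_\nu u_i=0$ on $\Sigma$, so they are legitimate competitors in the Min-Max characterization of $\mu_n(0)$ (or $\mu_n(0)+\gamma$).

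For case (iii), where $1\le\alpha<3/2$ and $b$ is non-constant, the strategy is different: I would show $\mu_n(\eps)\to\infty$ by proving that the quadratic form $\int_{\Omega_\eps}|D^2u|^2$ controls $\int_{\Gamma_\eps}(\partial_\nu u)^2$ with a constant blowing up as $\eps\to0$. Equivalently, using the rescaled layer, a test function concentrated near $\Gamma_\eps$ that carries unit boundary mass $\int_{\Gamma_\eps}(\partial_\nu u)^2=1$ necessarily has $\int_{\Omega_\eps}|D^2u|^2\ge c\,\eps^{2\alpha-3}\to\infty$, because the non-constancy of $b$ forces a genuine oscillation of $\partial_\nu u$ along $\Gamma_\eps$ which, after unfolding, costs Hessian energy of order $\eps^{2\alpha-3}$; since this lower bound is uniform over an $n$-dimensional space of competitors (one can produce such blow-up on any fixed finite-dimensional trial space by the same unfolding argument), the Min-Max value $\mu_n(\eps)$ diverges. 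The delicate point here is extracting the uniform-in-$n$ divergence rather than just $\mu_1(\eps)\to\infty$; this follows because the energy lower bound is quadratic-form-based and hence applies to every normalized element of the trial space simultaneously. I would refer to \cite[Thm.\,4.6]{FerreroLamb} and \cite{ArrLamb} for the detailed execution of these unfolding estimates.
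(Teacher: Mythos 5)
First, a point of fact: the paper does not prove Theorem~\ref{thm:trichotomyMDBS}. It is explicitly \emph{recalled} from \cite[Thm.~4.6]{FerreroLamb} (``We recall now the following result\dots''), so there is no in-paper argument to measure your attempt against; the relevant comparison is with the proof in \cite{FerreroLamb}, to which you yourself defer at the end.

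On the merits, your outline identifies the right mechanisms: the critical exponent comes from the layer bookkeeping $\eps^{2(\alpha-2)}\cdot\eps=\eps^{2\alpha-3}$ for the Hessian energy generated per unit of $\int_{\Gamma_\eps}(\partial_\nu u)^2d\sigma$; the cell problem for $V$ governs the critical case; and in the subcritical case the oscillation of $u=0$ on the graph forces $\partial_\nu u\to 0$ on $\Gamma$ for bounded-energy sequences, whence $\mu_1(\eps)\to\infty$ and therefore $\mu_n(\eps)\ge\mu_1(\eps)\to\infty$ for every $n$ (your concern about ``uniform-in-$n$ divergence'' is moot). However, as written this is a plan rather than a proof: every technically hard step is stated as an expectation and referred back to \cite{FerreroLamb,ArrLamb} --- the construction of an $H^2$-bounded transplantation compatible with both $u=0$ on $\partial\Omega_\eps$ and $\partial_\nu u=0$ on $\Sigma_\eps$; the corrector ansatz in case (ii) showing that the extra layer energy equals $\gamma\int_\Gamma(\partial_\nu u)^2d\sigma$ (the proportionality to the denominator is precisely what turns the limit into a clean shift of \emph{every} eigenvalue by the same constant $\gamma$, and your sketch does not make this explicit); the decay of $V$ as $y_N\to-\infty$ needed for $\gamma<\infty$; and the quantitative lower bound in case (iii). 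Note also that the executed proofs in \cite{FerreroLamb} (following \cite{ArrLamb}) are organized around spectral convergence via compact convergence of resolvents between varying Hilbert spaces with connecting/unfolding operators, which delivers convergence of all eigenvalues and eigenprojections at once, rather than a term-by-term min--max transplantation; your route is plausible but would require re-deriving essentially all of those estimates rather than citing them, so as it stands the proposal is an accurate road map with the substance outsourced.
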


Regarding the (DBS) eigenvalue problem, there are only partial results available. By means of the stability result (\cite[Theorem 3.2]{FerreroLamb}, see also Open Problem 5.1 \cite{FerreroLamb}) one can deduce the following.
\begin{theorem}[The stability theorem - (DBS) version] Let $\Omega_{\eps}$, $\eps \geq 0$ be as above, with $\Omega_0=\Omega$. Let $\lambda_n(\eps )$, $n\in \N$,  be the eigenvalues of problems  
\begin{equation} \label{eq:Steklov-DIR}
\begin{cases}
\Delta^2 u=0, & \qquad \text{in } \Omega_{\eps} \, , \\
u=0, & \qquad \text{on } \partial\Omega_{\eps } \, , \\
\partial_{\nu}u =0, & \qquad \text{on } \Sigma_{\eps} \, , \\
\Delta u= \la \, \partial_{\nu} u, & \qquad \text{on } \Gamma_{\eps
} \, ,
\end{cases}
\end{equation}
for  $\eps \geq 0$. Then, if $\alpha >3/2$, $\lambda_n(\eps )\to \lambda_n(0)$, as $\eps \to 0$. 
\end{theorem}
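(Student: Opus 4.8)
The plan is to use Reilly's formula to recast the (DBS) eigenvalues as the (MDBS) eigenvalues perturbed by an oscillating mean-curvature boundary term, to invoke the already available (MDBS) stability of Theorem~\ref{thm:trichotomyMDBS}(i), and to show that the perturbing term is asymptotically negligible precisely when $\alpha>3/2$; equivalently, one checks the hypotheses of the general (DBS) stability criterion of Ferrero and Lamberti in that regime.

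\emph{Variational set-up.} For $\eps\ge 0$ put $V_\eps=\{u\in H^2(\Omega_\eps)\cap H^1_0(\Omega_\eps):\ \p_\nu u=0\ \text{on}\ \Sigma_\eps\}$. By Theorem~\ref{kadlec} and the standard argument, the eigenvalues of \eqref{eq:Steklov-DIR} and of \eqref{eq:Steklov-modificatoDIR} have the Min--Max representations
\[
\lambda_n(\eps)=\min_{\substack{E\subset V_\eps\\ \dim E=n}}\ \max_{0\ne u\in E}\frac{\int_{\Omega_\eps}|\Delta u|^2\,dx}{\int_{\Gamma_\eps}|\p_\nu u|^2\,d\sigma},\qquad \mu_n(\eps)=\min_{\substack{E\subset V_\eps\\ \dim E=n}}\ \max_{0\ne u\in E}\frac{\int_{\Omega_\eps}|D^2 u|^2\,dx}{\int_{\Gamma_\eps}|\p_\nu u|^2\,d\sigma},
\]
the denominators being the full boundary integrals since $\p_\nu u=0$ on $\Sigma_\eps$. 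As $\Omega_\eps$ is of class $C^{1,1}$ and $u\in H^2(\Omega_\eps)\cap H^1_0(\Omega_\eps)$, Reilly's formula (already used in this section) applies and, $\p_\nu u$ vanishing on $\Sigma_\eps$, becomes
\[
\int_{\Omega_\eps}|\Delta u|^2\,dx=\int_{\Omega_\eps}|D^2 u|^2\,dx+\int_{\Gamma_\eps}\cH_\eps\,(\p_\nu u)^2\,d\sigma,\qquad u\in V_\eps,
\]
where $\cH_\eps$ is the mean curvature of $\Gamma_\eps$. Since $\Gamma=W\times\{0\}$ is flat, for $\eps=0$ this yields $\lambda_n(0)=\mu_n(0)$, so it is enough to prove $\lambda_n(\eps)\to\mu_n(0)$.

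\emph{Reduction to one estimate.} The crux is the claim that, \emph{if $\alpha>3/2$, there is $\omega(\eps)\to 0^+$ such that}
\[
\Bigl|\int_{\Gamma_\eps}\cH_\eps\,(\p_\nu u)^2\,d\sigma\Bigr|\le\omega(\eps)\Bigl(\int_{\Gamma_\eps}(\p_\nu u)^2\,d\sigma+\int_{\Omega_\eps}|D^2 u|^2\,dx\Bigr),\qquad u\in V_\eps .
\]
Granted this, the Reilly identity gives, for every $0\ne u\in V_\eps$, the pointwise bounds $(1-\omega)R^{\mathrm{MDBS}}_\eps(u)-\omega\le R^{\mathrm{DBS}}_\eps(u)\le(1+\omega)R^{\mathrm{MDBS}}_\eps(u)+\omega$ (with $\omega=\omega(\eps)$) for the Rayleigh quotients $R^{\mathrm{DBS}}_\eps$, $R^{\mathrm{MDBS}}_\eps$ appearing in the Min--Max formulas above; applying $\min_E\max_E$ yields $(1-\omega)\mu_n(\eps)-\omega\le\lambda_n(\eps)\le(1+\omega)\mu_n(\eps)+\omega$. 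Since $\alpha>3/2$, Theorem~\ref{thm:trichotomyMDBS}(i) gives $\mu_n(\eps)\to\mu_n(0)$, whence $\lambda_n(\eps)\to\mu_n(0)=\lambda_n(0)$. This route avoids any uniform $H^2$-estimate on $\{\Omega_\eps\}$ — a welcome point, as the uniform outer-ball radius of $\Omega_\eps$ degenerates like $\eps^{2-\alpha}$.

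\emph{The core estimate and the main obstacle.} To prove the claim I would push the curvature integral into the interior by a Rellich--Pohozaev (vector-field) identity: choosing a field $F_\eps$ on $\overline{\Omega_\eps}$, supported in a layer of width $\sim\eps$ beneath $\Gamma_\eps$ and equal to $\cH_\eps\nu_\eps$ on $\Gamma_\eps$, and using $u=0$ on $\p\Omega_\eps$ (so $\nabla u=(\p_\nu u)\nu$ there), one rewrites $\int_{\Gamma_\eps}\cH_\eps(\p_\nu u)^2\,d\sigma$ as a volume integral over $\Omega_\eps$ involving only $F_\eps$, $DF_\eps$, $\nabla u$ and $D^2 u$ — no boundary traces of second derivatives occur — and the $\nabla u$-terms are absorbed into the right-hand side of the claim via $\int_{\Gamma_\eps}|\nabla u|^2=\int_{\Gamma_\eps}(\p_\nu u)^2$ and the thin-layer bound $\int_{\{g_\eps-c\eps<x_N<g_\eps\}}|\nabla u|^2\lesssim\eps\int_{\Gamma_\eps}(\p_\nu u)^2+\eps^2\int_{\Omega_\eps}|D^2 u|^2$. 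The obstacle is that $\cH_\eps$, $F_\eps$, $DF_\eps$ grow like $\eps^{\alpha-2}$, $\eps^{\alpha-2}$, $\eps^{\alpha-3}$, so a brute-force bound only gives $\omega\sim\eps^{\alpha-2}$, which diverges for $3/2<\alpha<2$; one must genuinely use that, up to lower-order terms, $\cH_\eps=\eps^{\alpha-2}(\Delta b)(x'/\eps)$, which has zero cell-average since $\int_Y\Delta b=0$. Writing $\cH_\eps$ as the $x'$-divergence of a periodic field of size $O(\eps^{\alpha-1})$ and integrating by parts once more on the fast variable — each step gaining a power of $\eps$ — a careful accounting (equivalently, inserting the two-scale corrector of Theorem~\ref{thm:trichotomyMDBS} at amplitude $\eps^{\alpha-1}$) produces $\omega(\eps)\lesssim\eps^{2\alpha-3}$, which tends to $0$ exactly when $\alpha>3/2$. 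This oscillation-versus-curvature balance is the heart of the matter and the origin of the threshold $3/2$; it is also why the range $1\le\alpha\le 3/2$ for (DBS), where by analogy with Theorem~\ref{thm:trichotomyMDBS} one expects a strange term or blow-up, remains open.
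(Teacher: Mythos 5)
Your route is genuinely different from the paper's: the paper does not prove this theorem directly but deduces it by invoking the general spectral stability theorem of Ferrero and Lamberti (\cite[Thm.~3.2]{FerreroLamb}), i.e.\ a compact-convergence result for the resolvents of the (DBS) operators on $\Omega_\eps$ in the regime $\alpha>3/2$. You instead try to piggyback on the (MDBS) stability of Theorem~\ref{thm:trichotomyMDBS}(i) via Reilly's formula. The reduction step is correct as far as it goes: granting your displayed claim with $\omega(\eps)\to0$, the two Rayleigh quotients are comparable up to $\omega(\eps)$ and the Min--Max principle transfers the convergence; and on the flat limit interface the curvature term vanishes so $\lambda_n(0)=\mu_n(0)$ (modulo a Grisvard-type justification of Reilly's identity on $\Omega=W\times(-1,0)$ and on $\Omega_\eps$, which have edges where $\Gamma_\eps$ meets $\Sigma_\eps$ -- a point you should not gloss over, since edge contributions are precisely what Theorem~\ref{thm:Gris} is about).

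The genuine gap is that the entire proof rests on the core estimate $\bigl|\int_{\Gamma_\eps}\cH_\eps(\p_\nu u)^2\,d\sigma\bigr|\le\omega(\eps)\bigl(\int_{\Gamma_\eps}(\p_\nu u)^2\,d\sigma+\int_{\Omega_\eps}|D^2u|^2\,dx\bigr)$ uniformly in $u\in V_\eps$, and this is asserted, not proved; you yourself identify it as ``the heart of the matter.'' For $3/2<\alpha<2$ the curvature $\cH_\eps\sim\eps^{\alpha-2}(\Delta b)(x'/\eps)$ blows up, so everything hinges on harvesting the zero cell-average of $\Delta b$. But the fast-variable integration by parts that is supposed to gain the missing power of $\eps$ must put a derivative somewhere: if it falls on the $u$-dependent factors it produces either tangential derivatives of $\p_\nu u$ on $\Gamma_\eps$ (traces of second derivatives, \emph{not} controlled by $\|D^2u\|_{L^2(\Omega_\eps)}$) or, in the Rellich volume version, terms like $\int_{\rm layer}|\nabla u|\,|D^2u|$ whose estimate via the thin-layer inequality requires an exponent count you have not carried out; if it falls on the oscillating vector field it produces $D^2F_\eps\sim\eps^{\alpha-4}$, and the resulting term $\eps^{\alpha-3}\int_{\rm layer}|\nabla u|^2\lesssim\eps^{\alpha-2}\int_{\Gamma_\eps}(\p_\nu u)^2+\dots$ reinstates exactly the divergent prefactor you are trying to kill. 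Which scenario prevails depends on the precise algebraic structure of the identity, which is the part that is missing. A second unaddressed point is uniformity over \emph{all} of $V_\eps$: for $u$ whose normal derivative oscillates in resonance with $\cH_\eps$ at scale $\eps$ there is no averaging gain at all, and one must instead show that such functions necessarily satisfy $\int_{\Omega_\eps}|D^2u|^2\gtrsim\eps^{-1}\int_{\Gamma_\eps}(\p_\nu u)^2$ so that the curvature term is absorbed on the right; this dichotomy (averaging for slowly varying traces versus coercivity for resonant ones) is nowhere implemented. As written, the proposal is a plausible program with a correct outer shell, but the statement it reduces everything to is itself an open estimate of essentially the same difficulty as the theorem.
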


In the case of the classical hinged plate problem, a similar trichotomy was proved in \cite{ArrLamb} (see also \cite{FerLambpoly} for a trichotomy theorem in the context of polyharmonic operators with strong intermediate boundary conditions). We recall here the result. With $\Omega_\eps$, $\Omega$ as in \eqref{eq:Omega-eps-oscillating}, let $\eta_n(\eps)$ be the eigenvalues of the hinged plate problem in $\Omega_\eps$, defined by
\begin{equation}\label{eq:hinged}
\begin{cases}
\Delta^2 u_\eps = \eta_n(\eps) u_\eps, \quad &\textup{in $\Omega_\eps$}, \\
u_\eps = 0, \quad &\textup{on $\p \Omega_\eps$}, \\
\partial_{\nu}u=0,  \quad &\text{on } \Sigma_{\eps}, \\
\partial^2_{\nu\nu} u_\eps = 0, \quad &\textup{on $\Gamma_\eps$},
\end{cases}
\end{equation}
where $u_\eps \in H^2(\Omega_\eps) \cap H^1_0(\Omega_\eps)$ is the eigenfunction. We interpret this problem in weak sense: 
\[
\int_{\Omega_\eps} D^2 u_\eps : D^2 \varphi dx= \eta_n(\eps) \int_{\Omega_\eps} u_\eps \varphi dx,
\]
for all $\varphi \in H^2(\Omega_\eps) \cap H^1_0(\Omega_\eps)$ with $\p_\nu \varphi = 0$ on $\Sigma_\eps$. We further define the clamped plate eigenvalue problem by
\begin{equation}\label{eq:clampedtheta}
\begin{cases}
\Delta^2 u = \theta_n u, \quad &\textup{in $\Omega$}, \\
u = 0, \quad &\textup{on $\p \Omega$}, \\
\partial_{\nu}u=0,  \quad &\text{on } \p \Omega, \\
\end{cases}
\end{equation}
where $u \in H_0^2(\Omega)$ is the eigenfunction. We interpret this problem in weak sense: 
\[
\int_{\Omega } D^2 u : D^2 \varphi dx= \theta_n \int_{\Omega} u \varphi dx,
\]
for all $\varphi \in H_0^2(\Omega)$. Finally, let $\gamma$ and $V$ be as in Theorem \ref{thm:trichotomyMDBS}. We define the problem
\begin{equation}\label{eq:strange}
\begin{cases}
\Delta^2 u = \zeta_n u, \quad &\textup{in $\Omega$}, \\
u = 0, \quad &\textup{on $\p \Omega$}, \\
\partial_{\nu}u=0,  \quad &\text{on } \Sigma, \\
\Delta u + \gamma \partial_{\nu}u = 0, \quad &\textup{on $\Gamma$},
\end{cases}
\end{equation}
where $u \in H^2(\Omega) \cap H^1_0(\Omega)$ is the eigenfunction. Similar to the previous problems this has to be understood in weak sense. Then we have the following
\begin{theorem}[The trichotomy theorem - hinged plate version] \label{thm:hingedplate} Let $\Omega_{\eps}$, $\eps \geq 0$ be as above, with $\Omega_0=\Omega$. Let $\eta_n(\eps )$, $\theta_n$, $\zeta_n$, $n\in \N$,  be the eigenvalues of problems \eqref{eq:hinged}, \eqref{eq:clampedtheta}, \eqref{eq:strange} respectively. Then the following statements hold:
\begin{itemize}
\item[(i)] If $\alpha >3/2$ then  $\eta_n(\eps )\to \eta_n(0)$, as $\eps \to 0$. 
\item[(ii)] If $\alpha =3/2$ then $\eta_n(\eps )\to  \zeta_n $ as $\eps \to 0$.
\item[(iii)] If $0< \alpha < 3/2$ and   $b$ is non-constant  then $\eta_n(\eps)\to \theta_n $, as $\eps \to 0$.
\end{itemize}
\end{theorem}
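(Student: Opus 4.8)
\emph{Sketch of the proof (following \cite{ArrLamb}).} The plan is to treat all three regimes at once as a spectral convergence result for the self-adjoint, positive, compact operators attached to the biharmonic form, using a common functional setting obtained by a boundary-layer analysis near $\Gamma_\eps$. Write $\eta_n(\eps)$ as the eigenvalues of the operator $T_\eps$ on $L^2(\Omega_\eps)$ defined by $T_\eps f = u$, where $u\in H^2(\Omega_\eps)\cap H^1_0(\Omega_\eps)$ with $\p_\nu u = 0$ on $\Sigma_\eps$ solves $\int_{\Omega_\eps} D^2 u : D^2\varphi\, dx = \int_{\Omega_\eps} f\varphi\, dx$ for all admissible $\varphi$; similarly let $T_0$, $T^{\mathrm{str}}$, $T^{\mathrm{cl}}$ be the compact operators of problems \eqref{eq:hinged} at $\eps=0$, of \eqref{eq:strange}, and of \eqref{eq:clampedtheta}. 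Since $g_\eps(x') = \eps^\alpha b(x'/\eps)$ has $\|\nabla g_\eps\|_{L^\infty} \le \|\nabla b\|_{L^\infty}$ for $\alpha\ge 1$, the domains $\Omega_\eps$ are then uniformly Lipschitz, $\Omega\subseteq\Omega_\eps\subseteq W\times(-1,\eps^\alpha\|b\|_{L^\infty})$, and $|\Omega_\eps\setminus\Omega| = O(\eps^\alpha)$; in particular the embeddings $H^2(\Omega_\eps)\hookrightarrow L^2(\Omega_\eps)$ are compact uniformly in $\eps$, so the $T_\eps$ are compact with uniformly bounded norms. (When $0<\alpha<1$, which occurs only in case (iii), one replaces uniform Lipschitz regularity by analogous uniform estimates exploiting $\Omega\subseteq\Omega_\eps$ and $|\Omega_\eps\setminus\Omega|\to 0$.) The theorem then amounts to the compact convergence $T_\eps\to T_0$ (case (i)), $T_\eps\to T^{\mathrm{str}}$ (case (ii)), $T_\eps\to T^{\mathrm{cl}}$ (case (iii)) in the sense appropriate to operators acting in varying Hilbert spaces, whence convergence of each $\eta_n(\eps)$ and of the spectral subspaces follows from the abstract theory used in \cite{ArrLamb}.

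The core of the matter is a single energy estimate quantifying the cost of the oscillating layer. For a fixed smooth $w$ defined near $\overline\Omega$ one has $\bigl|\int_{\Omega_\eps}|D^2 w|^2\,dx - \int_\Omega|D^2 w|^2\,dx\bigr| = O(\eps^\alpha)$, which is negligible; the delicate point concerns functions $u_\eps$ on $\Omega_\eps$ that, because they vanish on $\Gamma_\eps$, carry an oscillating trace of amplitude $\sim\eps^\alpha(\p_\nu u_\eps)(x',0)\,b(x'/\eps)$ on the reference hypersurface $\Gamma$. After straightening $\Gamma_\eps$ to $\Gamma$ and rescaling by the cell variable $y=x/\eps$, the corrector needed to reconcile $u_\eps$ with a macroscopic limit is governed by the cell solution $V$ on $Y\times(-\infty,0)$ of Theorem~\ref{thm:trichotomyMDBS}, and it contributes to $\int_{\Omega_\eps}|D^2 u_\eps|^2\,dx$ a term asymptotic to $\eps^{\,2\alpha-3}\,\gamma\int_\Gamma(\p_\nu u_\eps)^2\,d\sigma$, with $\gamma = \int_{Y\times(-\infty,0)}|D^2 V|^2\,dy$. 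This produces the trichotomy: for $\alpha>3/2$ the extra term vanishes and one recovers the flat problem \eqref{eq:hinged} at $\eps=0$; for $\alpha=3/2$ it survives as $\gamma\int_\Gamma(\p_\nu u)^2\,d\sigma$, i.e. the natural boundary condition $\Delta u + \gamma\,\p_\nu u=0$ on $\Gamma$ of \eqref{eq:strange} (note that $\Gamma$ is flat, so on $\Gamma$ one has $\Delta u = \p^2_{\nu\nu}u$ when $u=0$ there); for $\alpha<3/2$ it blows up, so finiteness of the limiting energy forces $\p_\nu u = 0$ on $\Gamma$, i.e. $u\in H^2_0(\Omega)$, giving the clamped problem \eqref{eq:clampedtheta} (here the hypothesis that $b$ is non-constant guarantees $\gamma>0$).

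With the layer estimate in hand, convergence follows from the two standard inequalities. For the $\limsup$ bound on $\eta_n(\eps)$ I would insert into the min--max for $\Omega_\eps$ the span of the first $n$ eigenfunctions of the relevant limit problem, each modified near $\Gamma$ by an $\eps^\alpha$-amplitude boundary-layer corrector built from $V$ (case (ii)) or arranged so as to satisfy $\p_\nu=0$ on $\Gamma_\eps$ (case (iii)), suitably extended to $\Omega_\eps$; the layer estimate shows the corresponding Rayleigh quotients converge to those of the limit problem. For the $\liminf$ bound I would take normalized eigenfunctions $u_{i,\eps}$ with $\eta_i(\eps)$ bounded, extract subsequences converging in $L^2$ of a fixed ball with weak $H^2(\Omega)$ limits $u_i$, verify that the $u_i$ are admissible for the limit problem --- in case (iii) precisely because blow-up of the layer energy forces $\p_\nu u_i=0$ on $\Gamma$, and in case (ii) via a compensated-compactness argument identifying the layer contribution with $\gamma\int_\Gamma(\p_\nu u_i)^2\,d\sigma$ --- and pass to the limit in the weak eigenvalue equation to obtain the bound on $\liminf_\eps\eta_n(\eps)$. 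Combining the two bounds yields convergence of every $\eta_n(\eps)$, and convergence of the spectral projections follows in the usual way once finite-dimensional eigenspaces are handled.

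The main obstacle is the critical case $\alpha=3/2$. One must first establish the well-posedness of the cell problem: existence and uniqueness, modulo affine functions, of a $Y$-periodic biharmonic $V$ on $Y\times(-\infty,0)$ with the prescribed trace and $\p^2_{y_N y_N}V=0$ on $Y$, together with exponential decay of $D^2 V$ as $y_N\to-\infty$, so that $\gamma$ is finite and the corrector is genuinely localized. Second, the matched asymptotics must be made rigorous: one has to show that the two-scale corrector is the correct one, that the cross terms between the macroscopic eigenfunction and the cell corrector do not survive in the energy expansion, and that the remainder is $o(1)$ uniformly on bounded sets --- this is where the uniform regularity of $\Omega_\eps$ and careful trace estimates on $\Gamma_\eps$ enter. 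Case (iii) carries a lesser but real difficulty, namely proving that $\p_\nu u_\eps\to 0$ in $L^2(\Gamma_\eps)$ quantitatively enough to place the limit in $H^2_0(\Omega)$ (and, for $0<\alpha<1$, doing so without uniform Lipschitz control); case (i) is comparatively soft, resting on the $O(\eps^\alpha)$ comparison estimates alone. The scheme follows \cite{ArrLamb}; compare also the (MDBS) version in \cite{FerreroLamb}.
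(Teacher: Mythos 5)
The paper gives no proof of this theorem: it is recalled from \cite{ArrLamb}, so there is nothing internal to compare against. Your sketch reconstructs the strategy of that reference (and of its (MDBS) analogue, \cite[Thm.\,4.6]{FerreroLamb}) faithfully: spectral convergence via compact convergence of the resolvent-type operators on varying domains, a boundary-layer corrector governed by the cell function $V$, and the energy scaling $\eps^{2\alpha-3}\gamma\int_\Gamma(\p_\nu u)^2\,d\sigma$, which is the correct count and correctly produces the three regimes and the critical exponent $3/2$. As written it is an outline rather than a proof --- the steps you flag (well-posedness and decay for the cell problem, rigorous justification of the two-scale corrector at $\alpha=3/2$, and the failure of uniform Lipschitz bounds for $0<\alpha<1$) are precisely where the work lies in \cite{ArrLamb} --- but the approach is the right one.
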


\begin{figure}
\captionsetup[subfigure]{justification=raggedright}
    \centering
    \begin{minipage}[b]{0.45\textwidth}
        \centering
         \begin{tikzpicture}[scale=0.9]
			%\draw[step=1cm,gray,very thin] (-5,-5) grid (5,5);
			\draw[thick] (0,0) circle (3cm);
			\draw[very thick] ({3*cos(2.5)}, {-3*sin(2.5)}) -- (2.8, 0) -- ({3*cos(2.5)},{3*sin(2.5)}) -- ({3*cos(27.5)},{3*sin(27.5)}) -- ({2.5*cos(30)},{2.5*sin(30)}) -- ({3*cos(32.5)},{3*sin(32.5)}) ;
			\draw[very thick, blue] ({3*cos(32.5)},{3*sin(32.5)}) -- ({3*cos(85)},{3*sin(85)});
			\draw[very thick] ({3*cos(85)},{3*sin(85)}) -- (0,2.3) -- ({3*cos(95)},{3*sin(95)}) -- ({3*cos(117.5)},{3*sin(117.5)}) 			-- ({2.5*cos(120)},{2.5*sin(120)})--({3*cos(122.5)},{3*sin(122.5)}) -- ({3*cos(170)},{3*sin(170)}) -- (-2.2,0) -- ({-3*cos(5)},{-3*sin(5)}) -- ({-3*cos(27.5)},{-3*sin(27.5)}) -- ({-2.5*cos(30)},{-2.5*sin(30)}) -- ({-3*cos(32.5)},{-3*sin(32.5)}) -- ({-3*cos(87.5)},{-3*sin(87.5)}) -- (0,-2.5) -- ({-3*cos(92.5)},{-3*sin(92.5)}) -- ({-3*cos(117.5)},{-3*sin(117.5)}) -- ({-2.5*cos(120)},{-2.5*sin(120)})--({-3*cos(122.5)},{-3*sin(122.5)}) --  ({3*cos(2.5)}, {-3*sin(2.5)});
			\draw[thick, red] ({3*cos(85)},{3*sin(85)}) arc(85: 95 : 3);
			\draw ({2.75*cos(60)}, {2.75*sin(60)}) -- ({3.5*cos(60)},{3.5*sin(60)}) ;
			\draw ({3.7*cos(60)},{3.7*sin(60)}) node{{\tiny \color{blue} $z^{n}_j$}};
			\draw ({0},{3}) -- ({0},{3.5});
			\draw (0, 3.7) node{{\tiny \color{red} $\eta^{n}_j$}};
			\draw[thick, orange] (0, 2.31) -- (0,2.98);
			\draw (0,2.75) -- (-0.6, 2);
			\draw (-0.7, 1.8) node{{\tiny \color{orange} $h_j^{n}$}};
			%({3*sqrt(3)/2}, -3/2) -- ({3*sqrt(2)/2}, {3*sqrt(2)/2}) -- ({2.7*cos(47.5)}, {2.7*sin(47.5)}) -- ({3*cos(50)}, {3*sin(50)}) 				-- (-3/2, {3*sqrt(3)/2}) -- ({-3/2 + 0.1}, {3*sqrt(3)/2 - 0.4}) -- ({-3/2 - 0.3}, {3*sqrt(3)/2 - 0.2}) -- ({-3*sqrt(3)/2}, 					3/2) -- ({-3*sqrt(3)/2 + 0.3}, {3/2 -0.25}) --({-3*sqrt(3)/2 -0.15}, {3/2 -0.25}) -- ({-3*sqrt(2)/2}, {-3*sqrt(2)/2}) -- 					({-3*sqrt(2)/2 + 0.4}, {-3*sqrt(2)/2 + 0.25}) -- ({-3*sqrt(2)/2 + 0.3}, {-3*sqrt(2)/2 + 0.25 - 0.5}) -- (3/2, {-3*sqrt(3)/2}) 				-- ({3/2 + 0.05}, {-3*sqrt(3)/2 + 0.45}) -- ({3/2 + 0.3}, {-3*sqrt(3)/2 + 0.2}) -- ({3*cos(37)}, {-3*sin(37)}) -- 							({2.58*cos(33.5)}, {-2.58*sin(33.5)}) -- ({3*sqrt(3)/2}, -3/2); 
			\end{tikzpicture} 
            \caption{Indented polygon}\label{fignaz}
    	\end{minipage}\hfill
    \begin{minipage}[b]{0.45\textwidth}
        \centering
        \begin{tikzpicture}[scale=0.9]

% Definizione del raggio del cerchio e della distanza per l'indentazione
\def\raggio{1.94}
\def\indentazione{0.5}

% Calcola i vertici del poligono regolare circoscritto
\foreach \angolo in {0,10,...,360} {
    \coordinate (P\angolo) at ({\raggio*cos(\angolo)},{\raggio*sin(\angolo)});
}

% Disegna la spezzata con indentazioni
\foreach \angolo [evaluate=\angolo as \prossimo using \angolo+10] in {0,10,...,350} {
    % Calcola il punto medio del lato
    \coordinate (M\angolo) at ($0.5*(P\angolo) + 0.5*(P\prossimo)$);
    
    % Calcola il vertice interno spostandosi verso il centro
    \coordinate (I\angolo) at ($(M\angolo)!-\indentazione!(0,0)$);
    
    % Disegna il lato con indentazione
    \draw[black, very thick] (P\angolo) -- (I\angolo) -- (P\prossimo);
}

% Chiude il poligono collegando l'ultimo lato con indentazione
%\coordinate (M360) at ($0.5*(P360) + 0.5*(P0)$);
%\coordinate (I360) at ($(M360)!-\indentazione!(0,0)$);
%\draw[blue, thick] (P360) -- (I360) -- (P0);

% Disegna il cerchio circoscritto
%\draw[thick, gray] (0,0) circle(\raggio);
\draw[very thick, black] (0,0) circle(3cm);

\end{tikzpicture} % second figure itself
        \caption{Indented polygon with $z_j^n=0$}\label{fignaz2}
    \end{minipage}
\end{figure}

\begin{rem}\label{mazrem}
Case $(iii)$ in Theorem \ref{thm:hingedplate} is related to a classical degeneration theorem by Maz'ya and Nazarov  for the approximation of a circular plate via indented polygons, see \cite[Section 3]{MazNaz}. We briefly describe their result here. Let $\Omega = B(0,1)$ be  the unit disk in $\R^2$. We approximate $\Omega$ with a sequence of indented polygons $\Omega_n \subset \Omega$, see Figures~\ref{fignaz}, \ref{fignaz2}. Each indentation is of triangular shape with height $h_j^n < 1/2$, $h_j^n \to 0$ as $n \to + \infty$, for all $j =1, \dots, n$, and basis defining a chord of length $\eta_j^n$, $j=1, \dots, n$. The distance between two different indentations is $z_j^n$. The Maz'ya-Nazarov degeneration theorem states that if 
\begin{equation} \label{convcon}
\sup_n \max_{1 \leq j \leq n} \frac{|z_j^n|}{|\eta_j^n|} < \infty, \qquad \max_{1 \leq j \leq n} \frac{|\eta_j^n|}{(h_j^n)^{2/3}} \to 0,
\end{equation}
as $n \to + \infty$, then the sequence $u_n$ of solutions to the Navier plate problem \eqref{hingednav} with smooth datum $f$ in the indented polygon $\Omega_n$ converges in $H^2(K)$, $K$ any subdomain verifying $\ov{K} \subset \Omega$, to the solution $u$ of
\begin{equation}\label{eq:clamped}
\begin{cases}
\Delta^2 u = f, \quad &\textup{in $\Omega$,} \\
u = 0, \quad & \textup{on $\p\Omega$,} \\
\p_{\nu} u = 0, \quad & \textup{on $\p\Omega$.}
\end{cases}
\end{equation}
To fix the ideas, assume that $|\eta_j^n| = 1/n = \eps$ and $h_j^n = \eps^\alpha$. Then the second condition in \eqref{convcon} can be read as
\[
\max_{1 \leq j \leq n} \frac{\eps}{\eps^{2\alpha/3}} \to 0 \quad \Rightarrow \quad \alpha < 3/2
\]
and therefore we recover the critical exponent $\frac{3}{2}$ of Theorem~\ref{thm:hingedplate}.
\end{rem}

\begin{rem} The stability results stated  in the above theorems for $\alpha >3/2$ can be deduced from  general stability results concerning  domains $\Omega_{\epsilon}$ and $\Omega$ that can be locally represented as the subgraphs of suitable functions $g_{\epsilon}$ and $g$, more general than those considered in the specific case \eqref{eq:Omega-eps-oscillating}. Namely, one can assume that there exists a finite collection of open cuboids $V$ (that play the role of local charts) such that for every fixed cuboid $V$ one has 
\[
\begin{split}
&\Omega_\eps \cap V =\{(x',x_N):x'\in W,\, -1<x_N<g_\eps(x')\},\\ 
&\Omega \cap V = \{(x',x_N):x'\in W,\, -1<x_N<g(x')\},
\end{split}
\]
up to a local change of coordinates depending on $V$. Using the terminology  of \cite{ArrLamb,FerreroLamb}, such open sets belong to the same `atlas class'. Proving the stability results in this more general framework, requires a number of technicalities, including suitable partitions of unities. We refer the interested reader to \cite{ArrLamb,FerreroLamb} for more details.
\end{rem}

\section*{Acknowledgments}
The authors are deeply  indebted  to  Prof. Alberto Ferrero for very helpful  discussions.  The authors  are members of the ``Gruppo Nazionale per l’Analisi Matematica, la Probabilit\`a e le loro Applicazioni'' (GNAMPA) of the ``Istituto Nazionale di Alta Matematica'' (INdAM). The first named author FF acknowledges financial support under the National Recovery and Resilience Plan (NRRP), Mission 4 Component 2 Investment 1.5 - Call for tender No.3277 published on December 30, 2021 by the Italian Ministry of University and Research (MUR) funded by the European Union – Next Generation EU, project Code ECS0000038 – eINS Ecosystem of Innovation for Next Generation Sardinia – CUP J83C21000320007 - Grant Assignment Decree No. 1056 adopted on June 23, 2022 by the MUR.  The second named author PDL  acknowledges the support  from the project 
 ``Perturbation problems and asymptotics for elliptic differential equations: variational and potential theoretic methods'' funded by the European Union - Next Generation EU and by MUR Progetti di Ricerca di Rilevante Interesse Nazionale (PRIN) Bando 2022 grant 2022SENJZ3.

\bibliographystyle{abbrv}
\bibliography{BabbibPIER}

\end{document}